\title[A Ronkin type function for coamoebas]{A Ronkin type function for coamoebas}
\author{Petter Johansson \& H{\aa}kan Samuelsson Kalm}
\thanks{The second author was partially supported by the Swedish Research Council.}
\address{Petter Johansson, Department of Mathematics, Stockholm University, SE-106 91 Stockholm, Sweden}
\email{petterj@math.su.se}
\address{H{\aa}kan Samuelsson Kalm, Department of Mathematical Sciences, Division of Mathematics, University of Gothenburg and 
Chalmers University of Technology, SE-412 96 G\"{o}teborg, Sweden}
\email{hasam@chalmers.se}
\date{\today}
\newtheorem{proposition}{Proposition}[section]
\newtheorem{theorem}[proposition]{Theorem}
\newtheorem{lemma}[proposition]{Lemma}
\newtheorem{porism}[proposition]{Porism}
\newtheorem{corollary}[proposition]{Corollary}
\theoremstyle{definition}
\newtheorem{example}[proposition]{Example}
\newtheorem{remark}[proposition]{Remark}
\numberwithin{equation}{section}
\newcommand{\C}{\mathbb{C}}
\newcommand{\debar}{\bar{\partial}}
\newcommand{\R}{\mathbb{R}}
\newcommand{\hol}{\mathscr{O}}
\newcommand{\real}{\mathfrak{R}\mathfrak{e}}
\newcommand{\imag}{\mathfrak{I}\mathfrak{m}}
\newcommand{\B}{\mathbb{B}}
\def\newop#1{\expandafter\def\csname #1\endcsname{\mathop{\rm #1}\nolimits}}
\begin{document}
\nocite{*}
\bibliographystyle{plain}

\begin{abstract}
The Ronkin function plays a fundamental role in the theory of amoebas.
We introduce an analogue of the Ronkin function in the setting of coamoebas.
It turns out to be closely related to a certain
toric arrangement known as the shell of the coamoeba and we use our
Ronkin type function to obtain some properties of it.
\end{abstract}

\maketitle
\thispagestyle{empty}

\section{Introduction}
Let $f$ be an exponential polynomial in $\C^n=\R^n+i\R^n$ of the form
\begin{equation*}
f(z)=\sum_{\alpha\in A} c_{\alpha} e^{\alpha\cdot z},
\end{equation*}
where $A\subset \mathbb{Z}^n$ is a finite subset, $c_{\alpha}\in \C^*:=\C\setminus\{0\}$, and $\alpha\cdot z=\sum_j\alpha_jz_j$.
We denote by $\mathcal{A}_f$ and $\mathcal{A}_f'$ the natural projections of $f^{-1}(0)$ on $\R^n$ and $i\R^n$ respectively.
The sets $\mathcal{A}_f$ and $\mathcal{A}_f'$ are known respectively as the \emph{amoeba} and the \emph{coamoeba} 
of $f$.\footnote{In perhaps more standard terminology, $\mathcal{A}_f$ and $\mathcal{A}_f'$ are the amoeba and coamoeba respectively 
of the \emph{Laurent polynomial} $\tilde{f}(\zeta):=\sum_{\alpha\in A} c_{\alpha}\zeta^{\alpha}$
on the complex torus $(\C^*)^n$.} 
Since $f$ is periodic in each imaginary direction with periodicity $2\pi$ we can also think of $f$ as a function
on $\C^n/(2\pi i\mathbb{Z})^n$ and we can view $\mathcal{A}_f'$ as a subset of the real 
torus $\mathbb{T}^n:=i\R^n/(2\pi i\mathbb{Z})^n$.

The Ronkin function \cite{ronkin} 
associated to $f$ is the function $R_f\colon \R^n\to\R$ defined by 
\begin{equation*}
R_f(x):= \frac{1}{(2\pi)^n} \int_{y \in [0,2\pi]^n} \log|f(x+iy)| \,dy,
\end{equation*}
and it plays a crucial role in the study of $\mathcal{A}_f$.
Since $\log|f(z)|$ is plurisubharmonic on $\C^n$, the Ronkin function is convex on $\R^n$,
and since $\log|f(z)|$ is pluriharmonic as long as $\real\, z\notin \mathcal{A}_f$, 
it is straightforward to check that $R_f$ is affine on $\R^n\setminus \mathcal{A}_f$. The gradient 
$\nabla R_f\colon\R^n\to\R^n$ is thus locally constant outside $\mathcal{A}_f$. In fact, see \cite{FoPaTs},
$\nabla R_f$ induces an injective map from the set of connected components of $\R^n\setminus \mathcal{A}_f$ to
$\Delta_f\cap \mathbb{Z}^n$, where $\Delta_f$ is the \emph{Newton polytope} of $f$, i.e., the convex hull of $A$ in $\R^n$. 
In particular, the number of complement components of
$\mathcal{A}_f$ is at most $|\Delta_f\cap \mathbb{Z}^n|$, and this bound turns out to be sharp, \cite{Rull}.

Let $\mathcal{E}_f$ be the set of complement components of $\mathcal{A}_f$. If $E\in \mathcal{E}_f$,
then, in view of the preceding
paragraph, it is natural to label $E$ by $\alpha$,
$E=E_{\alpha}$, where $\alpha=\nabla R_f(x)$ for any $x\in E$. 
Let $a_{\alpha}:=R_f(x)-\alpha\cdot x$ for any $x\in E_{\alpha}$ and let
\begin{equation*}
\sigma(x):=\max\{a_{\alpha} + \alpha\cdot x;\, E_{\alpha}\in \mathcal{E}_f\}.
\end{equation*}
Then $\sigma$ is a tropical polynomial and the set where $\sigma$ is non-smooth is called 
the \emph{spine} of $\mathcal{A}_f$. It is proved in \cite{PaRull} that the spine is contained in 
$\mathcal{A}_f$ and that it is a deformation retract of $\mathcal{A}_f$.

Let us now look at the Ronkin function in a slightly different way. Let $T_f:=dd^c\log|f(z)|$ be the Lelong current, 
where $d^c:=(\partial-\debar)/2\pi i$ so that $dd^c=i\partial\debar/\pi$.
One can view the Ronkin function as a $dd^c$-potential to an average of translates of $T_f$. More precisely,
consider $R_f$ as a function on $\C^n$ that is independent of $y=\imag\,z$ and let, for $w\in \C^n$, 
$T_f^{w}(z):=T_f(z+w)$. Then one can check that,
in the sense of currents in $\C^n$,
\begin{equation*}
dd^c R_f=\frac{1}{(2\pi)^n} \int_{s\in [0,2\pi]^n} T_f^{is}\, ds.
\end{equation*}
This observation is the motivation for our analogue of the Ronkin function for coamoebas.

The first step is to take an appropriate average of currents $T_f^r$ for $r\in \R^n$. 
Let $\B^k(a,R)$ be the ball in $\R^k$ with radius $R$ centered at $a$ and let 
$\textrm{Vol}_k$ denote the $k$-dimensional volume of a set in $\R^n$; set
also $\B^k:=\B^k(0,1)$. In Section~\ref{sec:T}
we prove that for any exponential polynomial $f$ 
the following limit exists and defines a positive $(1,1)$-current $\check{T}_f$ in $\C^n$ 
whose coefficients are independent of $x=\real\, z$.
\begin{equation}\label{rar}
\lim_{R\to\infty} \frac{1}{\kappa^{n-1}(R)} \int_{r\in \B^n(0,R)} T_f^r \, dr
\end{equation}
where we, for future reference and convenience, have introduced the notation
$\kappa^{n-1}(R):=R^{n-1}\textrm{Vol}_{n-1}(\B^{n-1})=\textrm{Vol}_{n-1}(\B^{n-1}(0,R))$.
Since $T_f$ is $d$-closed and periodic in each imaginary direction the current $\check{T}_f$ has the same properties.

Actually, we obtain an explicit formula for $\check{T}_f$ in terms of data from the Newton polytope $\Delta_f$.
To describe this formula assume first that $\Delta_f=:\Gamma$ is one-dimensional. Then, since $A\subset \mathbb{Z}^n$,
one can choose $\alpha_0, \beta\in \mathbb{Z}^n$ and integers $0=k_0<k_1<\cdots < k_{\ell}$ such that
\begin{equation}\label{rut}
A=\Gamma\cap A =\{\alpha_0, \alpha_0 + k_1\beta,\cdots, \alpha_0 + k_{\ell}\beta\}.
\end{equation}
We can thus write
\begin{equation*}
f(z)=e^{\alpha_0\cdot z} \sum_{j=0}^{\ell} c_j \big(e^{\beta\cdot z} \big)^{k_j}=e^{\alpha_0\cdot z}P(e^{\beta\cdot z}),
\end{equation*}
where $c_j:=c_{\alpha_0+k_j\beta}$ and $P(w):=\sum_{j=0}^{\ell} c_j w^{k_j}$ is a one-variable polynomial. 
Let $\{a_j\}$ be the distinct zeros of $P$ and let $\{d_j\}$ be the associated multiplicities. Then,
by Proposition~\ref{edgeprop}, we have
\begin{equation}\label{eq:1dimformel}
\check{T}_f(x+iy)=\sum_j d_j \frac{\beta\cdot dx}{|\beta|} \wedge [\beta\cdot y=\arg a_j],
\end{equation}
where we consider $\arg$ as a multivalued function and 
$[\beta\cdot y=\arg a_j]$ is the current of integration over the real hypersurfaces $\{\beta\cdot y=\arg a_j\}$
that are oriented so that $(\beta\cdot dx)\wedge [\beta\cdot y=\arg a_j]$ becomes a positive current in $\C^n$. 

In the general case when the dimension of $\Delta_f$ is arbitrary, let $\mathcal{F}_1(\Delta_f)$ be the 
edges, i.e., the one-dimensional faces of $\Delta_f$, and let, for $\Gamma\in\mathcal{F}_1(\Delta_f)$,
\begin{equation*}
f_{\Gamma}(z):=\sum_{\alpha\in \Gamma\cap A} c_{\alpha}e^{\alpha\cdot z}.
\end{equation*}
The Newton polytope of $f_{\Gamma}$ is one-dimensional and $\check{T}_{f_{\Gamma}}$ is given 
by a formula like \eqref{eq:1dimformel}. An explicit formula for $\check{T}_f$ thus follows from the next
result. Here 
$\gamma_{\Delta_f}(\Gamma)$ is the \emph{external angle} of $\Gamma$ in $\Delta_f$; see the next section for 
a definition.

\begin{theorem}\label{Thuttsats} 
Let $f$ be an exponential polynomial with associated 
Newton polytope $\Delta_f$. Then
\begin{equation*}
\check{T}_f = \sum_{\Gamma \in\mathcal{F}_1(\Delta_f)}
\gamma_{\Delta_f}(\Gamma) \check{T}_{f_{\Gamma}}.
\end{equation*}
\end{theorem}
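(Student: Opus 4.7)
My plan is to decompose the averaging integral $\int_{|r|\le R}T_f^r\,dr$ into ``edge fingers'' $D_\Gamma$ --- one per $\Gamma\in\mathcal{F}_1(\Delta_f)$ --- on which $f(z+r)$ is asymptotically controlled by its edge truncation $f_\Gamma(z+r)$, and then to extract $\gamma_{\Delta_f}(\Gamma)$ from a volume computation. Let $\beta$ be a primitive direction vector of $\Gamma$ and $H_\Gamma:=\{r\in\R^n:\beta\cdot r=0\}$, so that $N(\Gamma)\subset H_\Gamma$ is an $(n-1)$-dimensional cone. Write $r=r_\parallel+(\beta\cdot r/|\beta|^2)\beta$ with $r_\parallel\in H_\Gamma$ and, for a threshold $M>0$, set
\[
D_\Gamma(M):=\bigl\{r\in\R^n:r_\parallel\in N(\Gamma),\ \mathrm{dist}_{H_\Gamma}(r_\parallel,\partial N(\Gamma))\ge M\bigr\}.
\]
Being deep in $N(\Gamma)$ forces $\min_{\alpha\in\Gamma}\alpha\cdot r-\max_{\alpha\notin\Gamma}\alpha\cdot r\ge cM$ for some $c=c(\Delta_f)>0$; hence $f(z+r)=f_\Gamma(z+r)\bigl(1+O(e^{-cM})\bigr)$ uniformly on any compact set in $z$. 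Since $\log|f(z+r)|-\log|f_\Gamma(z+r)|$ is thus $O(e^{-cM})$ in $L^\infty_{\mathrm{loc}}(\C^n)$, standard continuity of $dd^c$ under bounded multiplicative perturbations yields $T_f^r=T_{f_\Gamma}^r+O(e^{-cM})$ as currents, uniformly for $r\in D_\Gamma(M)$.

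By~\eqref{eq:1dimformel}, $T_{f_\Gamma}^r$ depends on $r$ only through $s=\beta\cdot r$. Fubini in the $(r_\parallel,s)$-decomposition then yields
\[
\int_{D_\Gamma(M)\cap\B^n(0,R)}T_{f_\Gamma}^r\,dr
=\int_{-|\beta|R}^{|\beta|R}T_{f_\Gamma}^{s\beta/|\beta|^2}\,V_{\Gamma,M}(s,R)\,\frac{ds}{|\beta|},
\]
where $V_{\Gamma,M}(s,R)$ is the $(n-1)$-volume of $\{r_\parallel\in N(\Gamma):\mathrm{dist}\ge M,\ |r_\parallel|^2\le R^2-s^2/|\beta|^2\}$. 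Since $N(\Gamma)$ is a cone, this volume equals $\gamma_{\Delta_f}(\Gamma)\,\kappa^{n-1}\bigl(\sqrt{R^2-s^2/|\beta|^2}\bigr)$ up to a boundary-layer error $O(MR^{n-2})$, and $T_{f_\Gamma}^{s\beta/|\beta|^2}$ paired with any compactly supported test form $\varphi$ has compact support in $s$. Taking $M=M_R:=\log R$, dividing by $\kappa^{n-1}(R)$, and applying dominated convergence gives
\[
\kappa^{n-1}(R)^{-1}\!\int_{D_\Gamma(M_R)\cap\B^n(0,R)}T_f^r\,dr\;\longrightarrow\;\gamma_{\Delta_f}(\Gamma)\check{T}_{f_\Gamma}.
\]

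It remains to show that the complement and overlaps contribute $o(1)$ after normalization. A combinatorial analysis of the normal fan shows that for large $|r|$, $r\in\B^n(0,R)\setminus\bigcup_\Gamma D_\Gamma(M_R)$ forces $r$ to lie within an $O(M_R)$-tube of a codimension-$\ge 2$ cell of the fan; the total volume of such tubes in $\B^n(0,R)$ is $O(M_R^2 R^{n-2})=o(R^{n-1})$, and since $T_f^r$ has uniformly bounded mass on $\mathrm{supp}\,\varphi$ this gives contribution $o(\kappa^{n-1}(R))$. Deep vertex-cone points of the complement, though potentially of larger volume, satisfy $T_f^r\wedge\varphi=O(e^{-cM_R})$ because $f(z+r)$ is then close to the non-vanishing monomial $c_v e^{v\cdot(z+r)}$. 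For pairwise overlaps $D_\Gamma(M_R)\cap D_{\Gamma'}(M_R)$ with $\Gamma\ne\Gamma'$, the deepness conditions either force an empty intersection (if $\Gamma,\Gamma'$ share no vertex, for $M_R$ large) or force both $\beta\cdot r$ and $\beta'\cdot r$ outside the compact $s$-supports of $T_{f_\Gamma}^r\wedge\varphi$ and $T_{f_{\Gamma'}}^r\wedge\varphi$ respectively, so there is no double counting. Summing over $\Gamma$ gives $\check{T}_f=\sum_\Gamma\gamma_{\Delta_f}(\Gamma)\check{T}_{f_\Gamma}$. The main technical obstacle I anticipate is the uniform current-level approximation $T_f^r\to T_{f_\Gamma}^r$ on $D_\Gamma(M_R)$: the zero locus of $f_\Gamma(z+r)$ sweeps through $\mathrm{supp}\,\varphi$ as $\beta\cdot r$ varies, and one must ensure that Bedford--Taylor-type continuity of $dd^c\log|\cdot|$ under $(1+O(e^{-cM}))$-multiplicative perturbations is uniform across the entire edge finger.
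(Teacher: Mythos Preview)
Your overall strategy---partition $\B^n(0,R)$ into edge regions, compare $T_f^r$ with $T_{f_\Gamma}^r$ there, and read off $\gamma_{\Delta_f}(\Gamma)$ from a volume factor---is exactly the paper's. But two steps fail as written.

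\textbf{The overlap claim is wrong.} Your $D_\Gamma(M)$ is the half-space $\{r:\hat n_\Gamma\cdot r\ge M\}$ (the $\beta_\Gamma$-component of $r$ is unconstrained), and for adjacent edges these half-spaces typically overlap in a full angular sector that contains the normal ray of the neighbouring edge. Take $n=2$, $\Delta_f=\mathrm{conv}\{(0,0),(0,1),(2,2)\}$, $\Gamma$ the left edge with $\beta=(0,1)$, $\Gamma'$ the edge $(0,1)$--$(2,2)$ with $\beta'=(2,1)$. Then $D_\Gamma(M)=\{r_1\le -M\}$, and the whole ray $N_{\Gamma'}=\{t(-1,2):t\ge 0\}$ sits inside $D_\Gamma(M)$ once $t\ge M$. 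On that ray $\beta'\cdot r=0$, so $T_{f_{\Gamma'}}^r\wedge\varphi$ (hence $T_f^r\wedge\varphi$) is nonzero, and the $\Gamma'$-contribution is counted once inside $\int_{D_\Gamma}$ and again inside $\int_{D_{\Gamma'}}$. In general this happens whenever the angle between $N_\Gamma$ and $N_{\Gamma'}$ is acute, which is generic. The paper sidesteps the whole issue by using \emph{thin} fingers $N_\Gamma^\epsilon+L_M$ with bounded $\beta$-component; those intersect only in bounded sets, and the large complement is not estimated directly but swallowed into a nonnegative remainder $\mu_\epsilon$ whose vanishing is forced by a trace-mass identity involving the quermassintegral $V_{n-1}(\Delta_f,\B^n)$ (Theorem~\ref{mass} and~\eqref{kul}).

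\textbf{The current approximation is not ``standard continuity''.} The assertion $\log|f(z+r)|-\log|f_\Gamma(z+r)|=O(e^{-cM})$ in $L^\infty_{\mathrm{loc}}$ is false near zeros of $f_\Gamma$: there $|f_\Gamma|$ is arbitrarily small while $|f-f_\Gamma|$ is only bounded by a fixed multiple of $e^{-cM}$, so the quotient blows up, and Bedford--Taylor does not apply to unbounded potentials. The paper's substitute (Lemmas~\ref{uppsklemma} and~\ref{deltalemma}) is genuinely more delicate: one shows $|f-f_\Gamma|<|f_\Gamma|$ only outside a tube of radius $\delta\sim e^{-\epsilon|r_\parallel|/d}$ around $f_\Gamma^{-1}(0)$, uses a Rouch\'e-type push-forward identity for the projection along $\beta$, Taylor-expands the test form in the $\beta$-direction, and integrates by parts to obtain $|\langle T_f-T_{f_\Gamma},\xi\rangle|\le C_\xi\,\delta\cdot(\text{local mass of }T_f)$, the last factor controlled by Theorem~\ref{fund}(ii). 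You rightly flag this as the main obstacle; it is, and it is not removable by soft continuity.
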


Theorem~\ref{Thuttsats} is a weak version of Theorem~\ref{Thuttsats}', which also
shows that the limit \eqref{rar} exists.
We remark that if $f$ is not an exponential polynomial but has similar growth and 
still is periodic in each imaginary direction, then we do not know 
whether the limit \eqref{rar} exists or not.

It follows from Theorem~\ref{Thuttsats} and \eqref{eq:1dimformel} that the support of 
$\check{T}_f$ is of the form $\R^n+i\mathcal{H}_f$, where $\mathcal{H}_f$
is a union of real hyperplanes of the form $\{\beta\cdot y=\arg a\}$ with $\beta\in \mathbb{Z}^n$ and $a\in\C$.
A hyperplane arrangement of this type, that repeats itself periodically, is
called a \emph{toric arrangment}, see, e.g., \cite{toric}.
Notice that $\mathcal{H}_f$ is 
the union of the coamoebas of $f_{\Gamma}$ for $\Gamma\in \mathcal{F}_1(\Delta_f)$.
This union is known as the \emph{shell} of the coamoeba of $f$, see \cite{petter}; 
the shell is the part of the \emph{phase limit set}, \cite{phaselimit}, that corresponds
to the edges of $\Delta_f$. Notice also that the hyperplanes 
constituting the shell come with a natural multiplicity, cf.\ \eqref{eq:1dimformel}.
The union of the shell and the coamoeba equals the closure of the
coamoeba, see, e.g., \cite{petter}. It is furthermore known,
\cite{Jens}, that if $C_1,\ldots,C_k$ are the distinct full-dimensional
complement components of the coamoeba, then there are $k$ distinct
complement components $E_1,\ldots,E_k$ of the shell such that $C_j\subset \overline{E}_j$ for every $j$.

\begin{figure}[ht]
\begin{center}
\includegraphics[height=4cm]{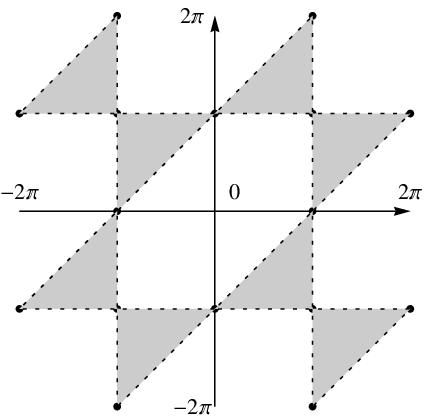}\qquad
\includegraphics[height=4cm]{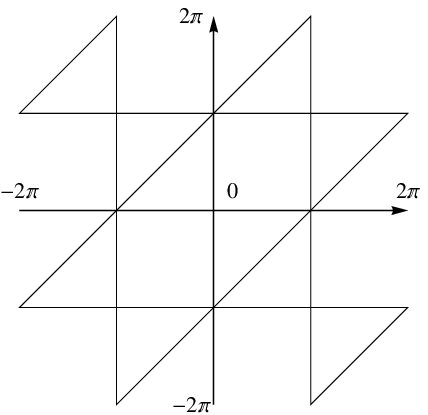}
\caption{\small{The shaded region on the left hand side is the coamoeba of $f(z)=1+e^{z_1}+e^{z_2}$, and the
right hand side shows its shell.}}
\label{figur1}
\end{center}
\end{figure}

The current $\check{T}_f$ is a positive and $d$-closed current in $\C^n$. It is well-known that then
there is a plurisubharmonic function $\varphi$ such that $dd^c\varphi=\check{T}_f$. Moreover, since 
the coefficients of $\check{T}_f$ are independent of $x=\real\, z$ there is such a $\varphi$ that only
depends on $y=\imag\, z$ and this function then must be convex. A convex function can of course not 
be periodic (unless it is constant), but since $\check{T}_f$ is periodic, 
we will see that there is a symmetric matrix $S_f$, explicitly
given in terms of $\Delta_f$ (see \eqref{Smatris}), such that the following holds.

\begin{theorem}\label{ronkinfunk}
There is convex and piecewise affine function $\varphi_f$ on $\C^n$ that only depends on $y=\imag\, z$
such that
\begin{itemize}
\item[(i)] $dd^c \varphi_f = \check{T}_f$,
\item[(ii)] $y\mapsto \varphi_f(y) - y^tS_fy$ is periodic, where we consider $y$ as a column vector.
\end{itemize}
\end{theorem}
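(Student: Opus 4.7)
The plan is to build $\varphi_f$ additively from explicit one-variable convex piecewise affine potentials, one for each hyperplane current appearing in the decomposition of $\check{T}_f$ provided by Theorem~\ref{Thuttsats}. Combining that theorem with \eqref{eq:1dimformel} one has
\[
\check{T}_f = \sum_{\Gamma\in\mathcal{F}_1(\Delta_f)} \gamma_{\Delta_f}(\Gamma)\sum_{j} d_j^\Gamma\,\frac{\beta_\Gamma\cdot dx}{|\beta_\Gamma|}\wedge [\beta_\Gamma\cdot y=\arg a_j^\Gamma],
\]
where $\beta_\Gamma\in\mathbb{Z}^n$ is the primitive integer direction of $\Gamma$ and $\{a_j^\Gamma,d_j^\Gamma\}$ are the distinct (nonzero) roots and multiplicities of the one-variable polynomial $P_\Gamma$ associated to $\Gamma$ as in~\eqref{rut}. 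Since all the coefficients $\gamma_{\Delta_f}(\Gamma)d_j^\Gamma$ are nonnegative, it suffices to produce, for each $\beta\in\mathbb{Z}^n$ and each $\theta\in\mathbb{R}$, a convex piecewise affine potential for the single current $\tfrac{1}{|\beta|}(\beta\cdot dx)\wedge[\beta\cdot y=\theta]$, and then to form the corresponding positive linear combination.

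For this single term I look for $\psi(y)=g(\beta\cdot y)$ with $g$ convex and piecewise affine in $t:=\beta\cdot y$. Writing $y_k=(z_k-\bar z_k)/(2i)$ and using $dd^c=(i/\pi)\partial\bar{\partial}$, a direct computation gives
\[
dd^c g(\beta\cdot y)=\frac{g''(\beta\cdot y)}{2\pi}\,(\beta\cdot dx)\wedge(\beta\cdot dy),
\]
while the current $[\beta\cdot y=\theta]$, with the orientation that makes $(\beta\cdot dx)\wedge[\beta\cdot y=\theta]$ positive, is $\sum_{k\in\mathbb{Z}}\delta(\beta\cdot y-\theta-2\pi k)(\beta\cdot dy)$. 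Matching the two expressions forces
\[
g''(t)=\frac{2\pi}{|\beta|}\sum_{k\in\mathbb{Z}}\delta(t-\theta-2\pi k),
\]
so one takes $g$ to be the unique (up to an affine function) convex piecewise affine function of one variable whose slope jumps by $2\pi/|\beta|$ at each $t=\theta+2\pi k$. Assembling,
\[
\varphi_f(y):=\sum_{\Gamma,j}\gamma_{\Delta_f}(\Gamma)\,d_j^\Gamma\, g_{\Gamma,j}(\beta_\Gamma\cdot y)
\]
is then a positive combination of convex piecewise affine functions of $y$ only, and satisfies $dd^c\varphi_f=\check{T}_f$ by construction, giving (i).

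For (ii), note that each $g''$ above has mean $1/|\beta|$ over one period, so $g(t)-t^2/(2|\beta|)$ has periodic derivative and, after fixing the free affine constant in $g$, is itself $2\pi$-periodic. Since $(\beta\cdot y)^2/(2|\beta|)=y^t(\beta\beta^t/(2|\beta|))y$, summing the quadratic corrections shows that $\varphi_f(y)-y^tS_fy$ is invariant under $y\mapsto y+2\pi v$ for every $v\in\mathbb{Z}^n$, with
\[
S_f=\tfrac{1}{2}\sum_{\Gamma\in\mathcal{F}_1(\Delta_f)}\gamma_{\Delta_f}(\Gamma)\sum_{j} d_j^\Gamma\,\frac{\beta_\Gamma\beta_\Gamma^t}{|\beta_\Gamma|}.
\]
The main obstacle beyond these essentially formal computations is to pin down the orientation and normalization of the codimension-one currents $[\beta\cdot y=\theta]$ carefully enough to make the identification in the second paragraph fully rigorous, and to verify that the symmetric matrix just produced coincides with the one defined by \eqref{Smatris}; once that is in place, convexity, piecewise affineness, and the independence of $\varphi_f$ of $x$ all follow directly from the corresponding properties of the one-variable building blocks $g$.
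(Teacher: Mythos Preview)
Your approach is correct and is essentially the same as the paper's: the paper makes your one-variable building block explicit by setting $g_{\Gamma,j}(t)=\tfrac{2\pi}{|\beta_\Gamma|}\Phi(t-b_{\Gamma,j})$ for a fixed universal piecewise linear $\Phi$ with $\Phi''=\sum_{k\in\mathbb Z}\delta_{2\pi k}$, and then verifies (i) and (ii) by the direct computations you outline. Your residual ``obstacle'' about matching $S_f$ with \eqref{Smatris} is immediate from $\sum_j d_j^\Gamma=\deg P_\Gamma=k_\ell=\textrm{Vol}_1(\Gamma)/|\beta_\Gamma|$, and the orientation issue is resolved exactly as you suspect, by the positivity convention stated after \eqref{eq:1dimformel}.
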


This result is essentially Theorem~\ref{ronkin} below but there we define $\varphi_f$ explicitly; this 
explicit function is our analogue of the Ronkin function.

\smallskip

In analogy with the amoeba case, let us consider the gradient $\nabla\varphi_f$ of $\varphi_f$, where we now consider 
$\varphi_f$ as a function on $\R^n_y$. It is not hard to see that $\nabla\varphi_f$ is locally constant
outside of $\mathcal{H}_f$. 
Clearly, $\nabla\varphi_f$ is not periodic but it turns out that $\nabla\varphi_f$ induces a map
$\R^n/(2\pi\mathbb{Z})^n\to \R^n/L_{S_f}$ where $L_{S_f}$ is the ``skew'' lattice generated by
the columns of the matrix $4\pi S_f$. Moreover, this map is injective from the set of connected components
of $(\R^n\setminus \mathcal{H}_f)/(2\pi \mathbb{Z})^n$. 

\begin{figure}[ht]
\begin{center}
\includegraphics[height=4cm]{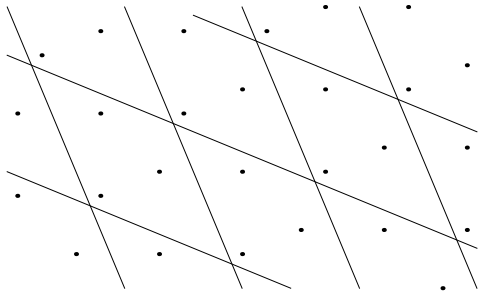}
\caption{\small{The intersection points of the lines make up the lattice $L_{S_f}$ of $f(z)=1+e^{z_1}+e^{z_2}$ 
and the dots are the images of the 
complement components of the shell of $\mathcal{A}'_f$ under $\nabla\varphi_f$;
cf.\ Figure~\ref{figur1} (right).}}
\label{figur2}
\end{center}
\end{figure}

As another application of our Ronkin type function we estimate the number of vertices of the 
toric arrangement $\mathcal{H}_f$ from below and from above.  
Let $\Gamma_1,\ldots,\Gamma_N$ be an enumeration of the edges of $\Delta_f$,
let $\beta_i\in \mathbb{Z}^n$ be parallel to $\Gamma_i$ so that the analogue of \eqref{rut} holds, 
and let, for $I=\{i_1,\ldots,i_n\}\subset \{1,\ldots,N\}$, $B_I$ be the matrix whose columns are
the $\beta_{i_k}$'s. Recall that the hyperplanes that constitute $\mathcal{H}_f$ come with a natural multiplicity; 
intersections of such
hyperplanes therefore also get a natural multiplicity (we consider all intersections as positive).
Let $\mathcal{I}_f$ be the set of points, repeated according to multiplicity, in $[0,2\pi)^n$ 
(or any other fundamental domain for $\mathbb{T}^n$) where $n$ hyperplanes from $\mathcal{H}_f$
meet transversely.

\begin{theorem}\label{estimate}
Let $M$ be the maximum of $\left|\det B_I\right|\prod_{k=1}^n\gamma_{\Delta_f}(\Gamma_{i_k})/|\beta_{i_k}|$
over all $I\subset \{1,\ldots,N\}$ 
and let $m$ be the minimum of the same quantity over all $I\subset \{1,\ldots,N\}$ such that $\det B_I\neq 0$.
Then
\begin{equation*}
\frac{2^n\det S_f}{M} \leq |\mathcal{I}_f| \leq \frac{2^n\det S_f}{m}.
\end{equation*}
\end{theorem}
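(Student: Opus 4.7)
The plan is to compute the total mass of the Monge--Ampère measure $(\check{T}_f)^n=(dd^c\varphi_f)^n$ on a product of fundamental domains in $x$ and $y$ in two different ways. For the first, by Theorem~\ref{ronkinfunk}(ii), write $\varphi_f(y)=y^tS_fy+g(y)$ with $g$ periodic, so that $\nabla\varphi_f(y+2\pi e_j)=\nabla\varphi_f(y)+4\pi S_fe_j$. Hence the gradient map descends to an a.e.\ bijection $\mathbb{T}^n\to\R^n/L_{S_f}$ (by convexity of $\varphi_f$), and the total Monge--Ampère mass on $[0,2\pi)^n$ equals the volume $(4\pi)^n\det S_f$ of a fundamental domain of $L_{S_f}$. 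Combined with the identity $(dd^c\varphi)^n=\frac{n!}{(2\pi)^n}\det(\partial^2\varphi/\partial y_j\partial y_k)\,dV(z)$, valid for any $\varphi$ depending only on $y$, this gives
\begin{equation*}
\int_{[0,1]^n_x\times[0,2\pi)^n_y}(\check{T}_f)^n = n!\,2^n\det S_f.
\end{equation*}

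For the second way, since $\varphi_f$ is convex and piecewise affine with singular set $\R^n_x+i\mathcal{H}_f$, the measure $(dd^c\varphi_f)^n$ is concentrated at the transverse intersection points in $\mathcal{I}_f$. At a vertex $y^*=\bigcap_{k=1}^n\{\beta_{i_k}\cdot y=c_k\}$, Theorem~\ref{Thuttsats} together with \eqref{eq:1dimformel} displays $\check{T}_f$ locally with each hyperplane carrying the coefficient $\gamma_{\Delta_f}(\Gamma_{i_k})d_{i_k}\,\beta_{i_k}\cdot dx/|\beta_{i_k}|$, where $d_{i_k}$ is the root multiplicity from $\check{T}_{f_{\Gamma_{i_k}}}$. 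Expanding $(\check{T}_f)^n$, using $(\beta_{i_1}\cdot dx)\wedge\cdots\wedge(\beta_{i_n}\cdot dx)=\det(B_I)\,dx_1\wedge\cdots\wedge dx_n$ and (via the substitution $u_k=\beta_{i_k}\cdot y-c_k$) $\bigwedge_{k=1}^n[\beta_{i_k}\cdot y=c_k] = \pm\,\delta_{y^*}(y)\,dy_1\wedge\cdots\wedge dy_n$, all the signs conspire to give the positive mass
\begin{equation*}
n!\,|\det B_I|\prod_{k=1}^n\frac{\gamma_{\Delta_f}(\Gamma_{i_k})d_{i_k}}{|\beta_{i_k}|}
\end{equation*}
of $(\check{T}_f)^n$ at $y^*$ per unit $x$-volume.

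Summing over transverse vertices in $[0,2\pi)^n$ and absorbing the factor $\prod_k d_{i_k}$ into the multiplicity with which $y^*$ is counted in $\mathcal{I}_f$, the two expressions equate to
\begin{equation*}
2^n\det S_f = \sum_{y^*\in\mathcal{I}_f}\big|\det B_{I(y^*)}\big|\prod_{k=1}^n\frac{\gamma_{\Delta_f}(\Gamma_{i_k(y^*)})}{|\beta_{i_k(y^*)}|}.
\end{equation*}
Since each summand lies in $[m,M]$, this yields $m\,|\mathcal{I}_f|\leq 2^n\det S_f\leq M\,|\mathcal{I}_f|$, which rearranges to the claimed bounds. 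The main obstacle is the local wedge-product computation at a vertex: tracking the signs and normalization factors to verify that the product of the codimension-one currents $[\beta_{i_k}\cdot y=c_k]$ yields a Dirac mass at $y^*$ contributing the factor $|\det B_I|$ (rather than $|\det B_I|^{-1}$), and that the $n!$ from the expansion of $(\check{T}_f)^n$ cancels the $n!$ in the Monge--Ampère formula so the combinatorial constants drop out cleanly.
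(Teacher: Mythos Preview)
Your strategy---evaluate $\int (dd^c\varphi_f)^n$ over a fundamental domain in two ways and sandwich---is exactly the paper's. The second computation (local mass at a transverse vertex) is the same as the paper's expansion via the explicit formula for $\varphi_f$ together with \eqref{ok}; your factor $|\det B_I|$ is correct, since $\bigwedge_k(\beta_{i_k}\!\cdot dx)=\det B_I\,dx$ while $\bigwedge_k[\beta_{i_k}\!\cdot y=c_k]=[\R^n_x\times\{y^*\}]$ as an integration current (transverse intersection of integration currents gives the integration current of the intersection with unit coefficient). The $n!$ from expanding the $n$th power indeed cancels, and the $(2\pi)^n$ from $\varphi_f$ cancels the $(2\pi)^{-n}$ in \eqref{ok}.

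Your first computation differs from the paper's. You argue via the subdifferential map: since $\nabla\varphi_f(y+2\pi\ell)=\nabla\varphi_f(y)+4\pi S_f\ell$, the real Monge--Amp\`ere mass of $[0,2\pi)^n$ equals the volume $(4\pi)^n\det S_f$ of a fundamental domain for $L_{S_f}$. This is correct, but your phrase ``a.e.\ bijection'' is imprecise: $\nabla\varphi_f$ is constant on each complement component of $\mathcal H_f$ and hence far from injective. What you need is the standard identity $MA(\varphi_f)(E)=|\partial\varphi_f(E)|$ for the subdifferential, together with the observation that equivariance forces $\partial\varphi_f([0,2\pi)^n)$ to tile $\R^n$ by $L_{S_f}$-translates. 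The paper instead writes $\varphi_f(y)=y^tS_fy+h(y)$ with $h$ periodic, expands $(dd^c\varphi_f)^n$ binomially, and kills every term containing a factor $dd^ch$ by Stokes' theorem (cf.\ \eqref{lut}); only $(dd^c(y^tS_fy))^n$ survives and integrates to $2^n\det S_f$. Both routes are short; the paper's avoids any appeal to properties of the subdifferential.
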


These estimates turn out to be sharp in a simple example, see the end of Section~\ref{sec:ronkin}, 
``Example~\ref{ex}, continued''. In the case that $f$ is generic we also show that 
$|\mathcal{I}_f|$ as well as the number of connected components of
$(\R^n\setminus\mathcal{H}_f)/(2\pi\mathbb{Z})^n$ equals
\begin{equation}\label{last}
\sum_{\substack{I\subset \{1,\ldots,N\} \\ |I|=n}} \left|\det B_I\right|,
\end{equation}
see Corollary~\ref{genericCor} below. Notice that, in view of the paragraph preceding Figure~1, \eqref{last} is an
upper bound for the number of complement components of the closure of $\mathcal{A}'_f$ in the generic case.
However, this bound is quite rough.

\medskip



The paper is organized as follows. In the next section we recall some foundational facts and notions
about convex polytopes, mixed volumes, and quermassintegrals of polytopes and balls. In Section~\ref{sec:lelong}
we prove some estimates of Lelong type for the volume of the zero set of an exponential polynomial.
Section~\ref{sec:T} is the technical core of the paper; we prove that the current $\check{T}_f$ exists and we compute it 
explicitly in terms of data from the Newton polytope. In Section~\ref{sec:ronkin} we define, and give
an explicit formula for our Ronkin type function and we derive some consequences. In the final section 
we give a cohomological description and interpretation of the $S_f$-matrix.

\bigskip

{\bf Acknowledgments:} We would like to thank Bo Berndtsson for valuable discussions
on the topic of this paper.

\section{Normal cones, external angles, and quermassintegrals}


Let $\Delta$ be a convex polytope in $\R^n$. We denote by $\mathcal{F}_k(\Delta)$ the set of $k$-dimensional faces of
$\Delta$ and we set $\mathcal{F}(\Delta):=\bigcup_k \mathcal{F}_k(\Delta)$.
For $\Sigma\in \mathcal{F}_k(\Delta)$, $k=0,\ldots,\textrm{dim}\,\Delta-1$ 
we define the \emph{normal cone} $N_{\Sigma}$ of $\Sigma$ by
\begin{equation*}
N_{\Sigma}:=\{x\in\R^n; \, x\cdot (y-y')\geq 0, \, \forall y\in\Sigma, \, \forall y'\in \Delta\}.
\end{equation*}
We also let $N_{\Delta}$ be the linear subspace of $\R^n$ that is orthogonal to the affine subspace spanned by $\Delta$.
Notice that $x\in N_{\Sigma}$ if and only if $x$ is an outward normal vector to $\Delta$ at $\Sigma$,
and moreover, if $\Sigma \in \mathcal{F}_k(\Delta)$, then $N_{\Sigma}$ is an $n-k$-dimensional cone.

Assume that $\Delta$ is the convex hull of some finite set $A\subset \mathbb{Z}^n$. 
For $\Sigma\in\mathcal{F}_k(\Delta)$, where $k=0,\ldots,\textrm{dim}\,\Delta-1$, and $\epsilon>0$ 
we let
\begin{equation}\label{Nepsilon}
N_{\Sigma}^{\epsilon}:=\{x\in N_{\Sigma}; \, x\cdot (\alpha-\alpha')\geq \epsilon |x| |\alpha-\alpha'|, 
\, \forall \alpha\in\Sigma, \, \forall \alpha'\in  A \setminus \Sigma\};
\end{equation}
if $\textrm{dim}\,N_{\Sigma}=\textrm{dim}\,\Delta$ 
we set $N_{\Sigma}^{\epsilon}:=N_{\Sigma}$.
One can check that, for $\epsilon$ sufficiently small, $N_{\Sigma}^{\epsilon}$
is a subcone of $N_{\Sigma}$ of the same dimension and 
$\textrm{Vol}(N_{\Sigma}^{\epsilon}\cap\B^{\textrm{dim}\,N_{\Sigma}})\to \textrm{Vol}(N_{\Sigma}\cap\B^{\textrm{dim}\,N_{\Sigma}})$ 
as $\epsilon\to 0$. 
Notice that if $\Sigma$ and $\Sigma'$ are different faces then $N_{\Sigma}^{\epsilon}\cap N_{\Sigma'}^{\epsilon}=\{0\}$.
In fact, without loss of generality we may then assume that
there are $\alpha\in\Sigma\setminus \Sigma'$ and $\alpha'\in\Sigma'$. If
$x\in  N_{\Sigma}^{\epsilon}\cap N_{\Sigma'}^{\epsilon}$, then $x\cdot (\alpha-\alpha') \geq \epsilon |x| |\alpha-\alpha'|$
and $x\cdot (\alpha'-\alpha)\geq 0$. It immediately follows that $x=0$.

For each $\Sigma\in \mathcal{F}_k(\Delta)$ we let $\gamma_{\Delta}(\Sigma)$ be the \emph{external angle} of $\Sigma$
relative to $\Delta$,
see, e.g., \cite{BH}. We will use that it can be defined as
\begin{equation}\label{extang}
\gamma_{\Delta}(\Sigma):= \frac{\textrm{Vol}_{n-k}(N_{\Sigma}\cap \B^{n-k})}{\textrm{Vol}_{n-k}(\B^{n-k})}.
\end{equation}

\bigskip

Let $K_1,\ldots,K_n$ be convex compact sets in $\R^n$.
Then, by Minkowski's theorem, $t\mapsto \textrm{Vol}_n(t_1K_1+\cdots+t_nK_n)$ 
is an $n$-homogeneous
polynomial. The \emph{mixed volume} $V(K_1,\ldots,K_n)$ of $K_1,\ldots,K_n$ is defined as the coefficient of the monomial
$t_1\cdots t_n$ divided by $n!$. If $K$ and $Q$ are two convex compacta we let for $k=0,\ldots,n$
\begin{eqnarray*}
V_k(K,Q):=V(K_1,\ldots,K_n), \quad & & K_i=K, \,\, i=1,\ldots n-k, \\
& & K_j=Q, \,\, j=n-k+1,\ldots,n,
\end{eqnarray*}
which are known as the \emph{quermassintegrals}. By Steiner's formula we have that
\begin{equation}\label{steiner}
\textrm{Vol}_n(K+tQ)=\sum_{k=0}^n {n \choose k} V_k(K,Q) t^k.
\end{equation}

Now, $\Delta+t\B^n$ is the $t$-neighborhood
of $\Delta$ in $\R^n$ and $\textrm{Vol}_n(\Delta+t\B^n)$ can be computed explicitly in terms of the external angles
of the faces of $\Delta$. 
In fact, from \cite{mcmullen}, see also \cite{BH}, it follows that
\begin{equation*}
\textrm{Vol}_n(\Delta+t\B^n)=\sum_{k=0}^n \textrm{Vol}_{k}(\B^{k})\sum_{\Sigma\in \mathcal{F}_{n-k}(\Delta)}
\gamma_{\Delta}(\Sigma) \textrm{Vol}_{n-k}(\Sigma)\, t^{k}.
\end{equation*}
Hence, from \eqref{steiner} we get in particular that
\begin{equation}\label{kul}
V_{n-1}(\Delta,\B^n)=\frac{\textrm{Vol}_{n-1}(\B^{n-1})}{n}\sum_{\Sigma\in \mathcal{F}_{1}(\Delta)}
\gamma_{\Delta}(\Sigma) \textrm{Vol}_{1}(\Sigma),
\end{equation}
which we will have use for later.

\smallskip

We also recall how mixed volumes can be computed as integrals of mixed Monge-Amp\`{e}re expressions.
Let $u_1,\ldots,u_n$ be smooth convex functions defined on $\R^n$; we can consider the $u_j$'s
as convex functions on $\C^n$ that only depend on $x=\real\, z$. Then it is straightforward to verify that
\begin{equation}\label{prat}
dd^cu_1\wedge \cdots \wedge dd^cu_n=\frac{n!}{(2\pi)^n}
M(u_1,\ldots,u_n)dx_1\wedge dy_1\wedge \cdots\wedge dx_n\wedge dy_n,
\end{equation}
where $M$ is the symmetric multilinear operator such that 
$M(u,\ldots,u)$ is the determinant of the real Hessian of $u$.
It is well-known that the right hand side of \eqref{prat} can be extended, as a continuous operator,
to the space of $n$-tuples of continuous functions with the topology of locally uniform convergence
to the space of measures with the weak topology.

For a convex function $u$ on $\R^n$ we let $K_u$ be the set of $\eta\in \R^n$ such that 
$x\mapsto \eta\cdot x - u(x)$ is bounded from above.\footnote{In other words, $\eta$ is in $K_u$
if and only if the Legendre transform of $u$ at $\eta$ is finite.}
If $u_1,\ldots,u_n$ are convex functions on $\R^n$ such that $K_{u_1},\ldots,K_{u_n}$ are bounded,
then, by, e.g., \cite[Proposition~3 (iv)]{PaRull},
\begin{equation*}
V(K_{u_1},\ldots,K_{u_n})=\int_{x\in\R^n} M(u_1,\ldots,u_n)dx.
\end{equation*}
In view of \eqref{prat} we thus get
\begin{equation}\label{prat2}
V(K_{u_1},\ldots,K_{u_n}) = \frac{1}{n!}\int_{\R^n+i[0,2\pi]^n} dd^cu_1\wedge \cdots\wedge dd^cu_n.
\end{equation}

Now, let $f$ be an exponential polynomial with corresponding Ronkin function $R_f$ and Newton polytope $\Delta_f$.
By the proof of \cite[Theorem~4]{PaRull} we see that $K_{R_f}=\Delta_f$. Moreover, if $u(x)=|x|$, then $K_u=\overline{\B}^n$
as one can easily check. From \eqref{prat2} we thus get that
\begin{equation}\label{kul2}
V_{n-1}(\Delta_f,\B^n)=\frac{1}{n!} \int_{\R^n+i[0,2\pi]^n} dd^c R_f(x)\wedge (dd^c |x|)^{n-1}.
\end{equation}

\section{Lelong type estimates}\label{sec:lelong}
Let $f$ be a holomorphic function in $\C^n$, let $T_f=dd^c\log|f|$ be the associated Lelong current, 
and let $\xi$ be a $(n-1,n-1)$-test form in $\C^n$. Then 
\begin{equation*}
\R^n\ni r\mapsto \int_z T_f(z)\wedge \xi(z-r)
\end{equation*}
is a smooth function on $\R^n$. In this section we collect some results that will give us a certain control of
the growth of this type of functions when $f$ is an exponential polynomial. 

\begin{theorem}\label{fund}
Let $f$ be an exponential polynomial in $\C^n$ and let $T:=dd^c\log |f|$ be the associated
Lelong current. 
\begin{itemize}
\item[(i)] The function
\begin{equation*}
R\mapsto \frac{1}{R^{n-1}}\int_{\substack{|x|<R \\ y\in [0,2\pi]^n}} T\wedge (dd^c|x|^2)^{n-1}
\end{equation*}
is bounded and increasing.
\item[(ii)] For $r\in \R^n$, let $T^r$ be the translate $T^r(z)=T(z+r)$
and let $R'>0$ be fixed. Then
\begin{equation*}
r\mapsto \int_{\substack{|x|<R' \\ y\in [0,2\pi]^n}} T^r\wedge (dd^c|x|^2)^{n-1}
\end{equation*}
is a bounded function.
\end{itemize}
\end{theorem}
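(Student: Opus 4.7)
The plan is to reduce both parts to tractable boundary integrals via Stokes' theorem, and then to exploit the asymptotic structure of the zero set of an exponential polynomial.

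For part (i), I would use Poincar\'e--Lelong to write $T=[Z_f]$, and set $I(R):=\int_{|x|<R,\,y\in[0,2\pi]^n}T\wedge(dd^c|x|^2)^{n-1}$. Since $dd^c|x|^2$ is $d$-closed, we have the identity $(dd^c|x|^2)^{n-1}=d\bigl(d^c|x|^2\wedge(dd^c|x|^2)^{n-2}\bigr)$, so Stokes on the annular tube $\{R_1<|x|<R_2\}\times[0,2\pi]^n$---with $y$-boundary contributions cancelling by periodicity of $T$ in each $y_j$---expresses $I(R)-I(R_0)$ as a difference of spherical boundary integrals. Parametrizing $x=R\theta$ on $\{|x|=R\}$, one checks that $d^c|x|^2=(R/\pi)\sum\theta_j\,dy_j$ and each $dx_j=R\,d\theta_j$ on the sphere, so the pullback of $d^c|x|^2\wedge(dd^c|x|^2)^{n-2}$ pulls out a factor of $R^{n-1}$, matching the normalization in the theorem. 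The hardest part, I expect, is deriving monotonicity of $I(R)/R^{n-1}$ from an analogue of Lelong's classical monotonicity formula, adapted to the PSH weight $|x|^2$ (whose sublevel sets are tubes $\{|x|<R\}+i\R^n$ rather than Euclidean balls). The standard argument relies on a logarithmic weight and the scale invariance of $\log|z|^2$ on $\C^n$, so it must be reworked directly in the present tubular setting, most likely by differentiating the Stokes identity in $R$ and rearranging.

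For boundedness, I would exploit the fact that outside a bounded region of $\R^n_x$, one can partition $\R^n_x$ into open cones on each of which a single monomial $c_\alpha e^{\alpha\cdot z}$ strictly dominates the others, so that $f$ is nowhere zero and $T$ vanishes in such cones. The zeros of $f$ accumulate only near the transition loci between these cones, where $f$ is asymptotically approximated (after an appropriate rescaling) by an edge polynomial $f_\Gamma$ for $\Gamma\in\mathcal{F}_1(\Delta_f)$. The zero set of each $f_\Gamma$ is a translation-periodic arrangement of real-codimension-one hyperplanes in $\R^n_x$, contributing $O(R^{n-1})$ mass to $I(R)$; summing over the finitely many edges yields the desired bound.

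Part (ii) follows by a translation argument. Since $|x-r|^2-|x|^2$ is affine in $x$, we have $(dd^c|x-r|^2)^{n-1}=(dd^c|x|^2)^{n-1}$, and substituting $z\mapsto z-r$ shows the integral equals the $(dd^c|x|^2)^{n-1}$-mass of $T$ on the translated tube $\{|x-r|<R'\}\times[0,2\pi]^n$. For $r$ in a bounded set, this is trivially bounded by continuity; for $|r|\to\infty$, either $r$ sits deep in a dominant-monomial cone (where $T$ vanishes in a neighborhood) or $r$ lies within bounded distance of a transition region, in which case the local $T$-mass of the bounded tube is uniformly controlled by the $T_{f_\Gamma}$-mass of a tube of radius $R'$, which is finite by the periodicity of $f_\Gamma$ in the directions transverse to $\Gamma$. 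Combining these cases gives uniform boundedness in $r$.
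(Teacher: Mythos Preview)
Your Stokes setup is right, but the monotonicity step is where you have a genuine gap: you flag it as ``the hardest part'' and offer only a vague plan to differentiate in $R$. The paper's key identity, which you are missing, is
\[
d^c|x|^2\wedge(dd^c|x|^2)^{n-2}=2^{n-1}|x|^{n-1}\,d^c|x|\wedge(dd^c|x|)^{n-2},
\]
obtained from $d^c|x|^2=2|x|\,d^c|x|$ and $dd^c|x|^2=2|x|\,dd^c|x|+2\,d|x|\wedge d^c|x|$. Plugging this into your boundary integral and applying Stokes once more (back inward) gives
\[
\frac{1}{R^{n-1}}\int_{\substack{|x|<R\\y\in[0,2\pi]^n}}T\wedge(dd^c|x|^2)^{n-1}
=2^{n-1}\int_{\substack{|x|<R\\y\in[0,2\pi]^n}}T\wedge(dd^c|x|)^{n-1},
\]
and the right-hand side is increasing simply because $T$ and $dd^c|x|$ are both positive. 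No adaptation of Lelong's ball-monotonicity is needed once one passes from $|x|^2$ to $|x|$.

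For boundedness in both (i) and (ii), your route through the cone decomposition and edge polynomials $f_\Gamma$ is in principle workable but is much heavier machinery than required, and in the paper's logical order it would be circular: the edge-approximation results (Propositions~\ref{edgeprop2} and companions) are proved \emph{using} Theorem~\ref{fund}. The paper instead averages over $y$ to replace $T$ by $dd^cR_f(x)$ with $R_f$ the Ronkin function, and then invokes a short convex-analysis lemma (Lemma~\ref{bob-lelong}): for any convex $\phi$ on $\R^n$ with $\phi(x)\le A|x|+B$, the integral $\int_{|x|<R,\,y\in K}dd^c\phi\wedge(dd^c|x|)^{n-1}$ is bounded by $C\cdot A$ with $C$ independent of $R$. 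Part (ii) follows by the same device applied to the translated Ronkin function $R_f^r(x)=R_f(x+r)$, which satisfies $R_f^r(x)\le A|x|+B_r$ with the same $A$ for all $r$; this uniform linear-growth constant is what gives the uniform bound, without any case analysis on the position of $r$.
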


\begin{remark}
It follows from Theorem~\ref{fund} (ii) that if $D\subset \R^n$ is a domain such that $\textrm{Vol}_n(D\cap \{|x|<R\})\lesssim R^k$
and we let $\mathbf{D}(R)=\{x+iy\in \C^n; \, |x|<R,\, x\in D,\, y\in [0,2\pi]^n\}$, then
\begin{equation}\label{penna2}
R\mapsto \frac{1}{R^k}\int_{\mathbf{D}(R)} T\wedge (dd^{c}|x|^2)^{n-1}
\end{equation}
is bounded. In particular, for $D=\R^n$ it thus follows that \eqref{penna2} with $k=n$ is bounded. 
However, Theorem~\ref{fund} (i) says that we can do ``better'' in this case.
\end{remark}

\begin{lemma}\label{bob-lelong}
Let $\phi$ be a convex function on $\R^n$ such that $\phi(x)\leq A|x|+B$ for some constants $A$ and $B$.
Define $\varphi\colon\C^n\to \R$ by $\varphi(z)=\phi(\real\, z)$ and let $K\Subset \R^n$.
Then
\begin{equation*}
\int_{|x|<R,\, y\in K} dd^{c}\varphi\wedge (dd^{c}|x|)^{n-1}\leq C\cdot A,
\end{equation*}
where $C$ is a constant that only depends on $n$ and $K$.
\end{lemma}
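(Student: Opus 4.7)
The plan is to apply formula \eqref{prat2} after two preliminary reductions. Since both $\varphi$ and $|x|$ are plurisubharmonic on $\C^n$ (a convex function composed with the affine projection $z \mapsto \real\,z$ is plurisubharmonic), the top-degree current $dd^{c}\varphi\wedge(dd^{c}|x|)^{n-1}$ is a positive $(n,n)$-current, hence a nonnegative Radon measure on $\C^n$. In particular, dropping the restriction $|x|<R$ from the domain of integration only enlarges the integral, and
\begin{equation*}
\int_{|x|<R,\,y\in K} dd^{c}\varphi\wedge(dd^{c}|x|)^{n-1}\ \leq\ \int_{x\in\R^n,\,y\in K} dd^{c}\varphi\wedge(dd^{c}|x|)^{n-1}.
\end{equation*}

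Next I factor out the $y$-integration. Because $\varphi$ and $|x|$ depend only on $x=\real\,z$, the top-form $dd^{c}\varphi\wedge(dd^{c}|x|)^{n-1}=h(x)\,dx_1\wedge dy_1\wedge\cdots\wedge dx_n\wedge dy_n$ has a coefficient $h$ depending on $x$ alone, so the $y$-integration contributes a pure volume factor and the right-hand side above equals
\begin{equation*}
\frac{\textrm{Vol}_n(K)}{(2\pi)^n}\int_{\R^n+i[0,2\pi]^n} dd^{c}\varphi\wedge(dd^{c}|x|)^{n-1}.
\end{equation*}
To apply \eqref{prat2} to the latter integral I need $K_\phi$ and $K_{|\cdot|}$ to be bounded. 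For $u(x)=|x|$ an easy check gives $K_u=\overline{\B}^n$. For $\phi$, the hypothesis $\phi(x)\leq A|x|+B$ yields $K_\phi\subset\overline{\B}^n(0,A)$: if $|\eta|>A$, then $\eta\cdot x-\phi(x)\geq\eta\cdot x-A|x|-B$ is unbounded along the ray $x=t\eta$, $t\to\infty$.

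Applying \eqref{prat2} with $u_1=\phi$ and $u_2=\cdots=u_n=|\cdot|$, and then invoking monotonicity and $1$-homogeneity of the mixed volume in each entry together with the identity $V(\overline{\B}^n,\ldots,\overline{\B}^n)=\textrm{Vol}_n(\overline{\B}^n)$, I obtain
\begin{equation*}
\int_{\R^n+i[0,2\pi]^n} dd^{c}\varphi\wedge(dd^{c}|x|)^{n-1}=n!\,V(K_\phi,\overline{\B}^n,\ldots,\overline{\B}^n)\leq n!\,A\,\textrm{Vol}_n(\overline{\B}^n),
\end{equation*}
which combined with the preceding inequalities gives the sought bound with $C=n!\,\textrm{Vol}_n(\overline{\B}^n)\,\textrm{Vol}_n(K)/(2\pi)^n$, a constant depending only on $n$ and $K$. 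I do not anticipate any real obstacle; the only point requiring mild care is that $|x|$ is not smooth at the origin, so the wedge product must be interpreted in the Bedford--Taylor sense, but continuity and plurisubharmonicity of $|x|$ make this routine and, crucially, preserve the factorization of the coefficient as a function of $x$ alone used in the second step.
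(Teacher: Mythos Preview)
Your proof is correct, but it takes a different route from the paper's argument. The paper proves the lemma by a direct Stokes' theorem computation: it replaces $\phi$ by $\tilde\varphi=\phi-(\alpha\cdot x+\beta)\geq 0$ using a supporting affine function, then builds $\psi_\epsilon=\max\{\tilde\varphi,(A+\epsilon)|x|+\text{affine}-C_\epsilon\}$, which agrees with $\tilde\varphi$ on $\{|x|<R\}$ and equals the explicit linear-growth function on $\{|x|\geq R'\}$. Stokes on $\{|x|=R'\}$, followed by the fact that $d^c|x|\wedge(dd^c|x|)^{n-1}$ is closed away from the origin, reduces everything to an integral over $\{|x|=1\}$ and yields the bound with $A+\epsilon$; one then lets $\epsilon\to 0$.

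Your approach instead goes straight through the mixed volume identity \eqref{prat2}: after dropping the cutoff $|x|<R$ by positivity and factoring out $\textrm{Vol}_n(K)$, you identify the remaining integral as $n!\,V(K_\phi,\overline{\B}^n,\ldots,\overline{\B}^n)$ and then bound it via the inclusion $K_\phi\subset\overline{\B}^n(0,A)$, monotonicity, and $1$-homogeneity of mixed volumes. This is shorter and yields the explicit constant $C=n!\,\textrm{Vol}_n(\B^n)\textrm{Vol}_n(K)/(2\pi)^n$. The trade-off is that you invoke the \cite{PaRull} result behind \eqref{prat2} as a black box, whereas the paper's argument is self-contained (only Stokes and basic pluripotential theory). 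One small cosmetic point: the ``coefficient $h(x)$'' you write is a measure rather than a function, since neither $\varphi$ nor $|x|$ is smooth; the product structure (a measure in $x$ times Lebesgue in $y$) is what you actually need, and it follows by regularizing and using the continuity of the Monge--Amp\`{e}re operator cited just after \eqref{prat}.
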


Notice that  
$R \mapsto \int_{|x|<R,\, y\in K} dd^{c}\varphi\wedge (dd^{c}|x|)^{n-1}$ is increasing
since $dd^{c}\varphi\wedge (dd^{c}|x|)^{n-1}$ is positive. It thus follows from the lemma that
\begin{equation*}
\lim_{R\to\infty} \int_{|x|<R,\, y\in K} dd^{c}\varphi\wedge (dd^{c}|x|)^{n-1} =
\int_{|\R^n + iK} dd^{c}\varphi\wedge (dd^{c}|x|)^{n-1} <\infty.
\end{equation*}

\begin{proof}
Since $\phi$ is convex there is an affine function $\alpha\cdot x + \beta$ such that
$\phi(x)\geq \alpha\cdot x+\beta$. Let $\tilde{\varphi}(x)=\phi(x)-\alpha\cdot x-\beta$ and consider $\tilde{\varphi}$
as a function on $\C^n$ that is independent of $\mathfrak{Im}\, z$.
Then $0\leq \tilde{\varphi}\leq A|x|+B-\alpha\cdot x-\beta$ in $\C^n$. Let 
\begin{equation*}
\psi_{\epsilon}=\max\{\tilde{\varphi}, (A+\epsilon)|x|+B-\alpha\cdot x -\beta -C_{\epsilon}\},
\end{equation*}
where $C_{\epsilon}$ is chosen so that $\psi_{\epsilon}=\tilde{\varphi}$ for $|x|<R$
and $\psi_{\epsilon}= (A+\epsilon)|x|+B-\alpha\cdot x -\beta -C_{\epsilon}$ for $|x|\geq R'>R$. 
Then we have
\begin{eqnarray*}
\int_{|x|<R, y\in K} dd^{c}\varphi \wedge (dd^{c}|x|)^{n-1} &=&
\int_{|x|<R, y\in K} dd^{c}\tilde{\varphi} \wedge (dd^{c}|x|)^{n-1} \\
&\leq &
\int_{|x|<R', y\in K} dd^{c}\psi_{\epsilon} \wedge (dd^{c}|x|)^{n-1} \\
&=&
\int_{|x|=R', y\in K} d^{c}\psi_{\epsilon} \wedge (dd^{c}|x|)^{n-1} \\
&=&
(A+\epsilon)\int_{|x|<R', y\in K} dd^{c}|x|\wedge (dd^{c}|x|)^{n-1}.
\end{eqnarray*}
It is straightforward to verify that $d(d^{c}|x|\wedge (dd^{c}|x|)^{n-1})=0$ for $x\neq 0$
and so, by Stokes' theorem, 
\begin{equation*}
(A+\epsilon)\int_{|x|<R', y\in K} dd^{c}|x|\wedge (dd^{c}|x|)^{n-1}=
(A+\epsilon)\int_{|x|=1, y\in K} d^{c}|x|\wedge (dd^{c}|x|)^{n-1}.
\end{equation*}
The lemma thus follows with $A$ replaced by $A+\epsilon$; letting $\epsilon\to 0$ the lemma follows.
\end{proof}

\begin{proof}[Proof of Theorem~\ref{fund}]
(i): Notice first that $d^{c}|x|^2=2|x|d^{c}|x|$ and that 
$dd^{c}|x|^2=2|x|dd^{c}|x| + 2d|x|\wedge d^{c}|x|$ so that 
\begin{equation*}
d^{c}|x|^2\wedge (dd^{c}|x|^2)^{n-2} = 2^{n-1}|x|^{n-1} d^{c}|x| \wedge (dd^{c}|x|)^{n-2}.
\end{equation*}
Using this and the fact that $T$ is periodic in the imaginary directions, we get by Stokes' theorem
that
\begin{equation*}
\frac{1}{R^{n-1}}\int_{|x|<R, y\in [0,2\pi]^n} T\wedge (dd^{c}|x|^2)^{n-1} \quad \quad \quad \quad \quad \quad
\end{equation*}
\begin{eqnarray*}
\quad \quad \quad \quad &=& \frac{1}{R^{n-1}}\int_{|x|=R, y\in [0,2\pi]^n} T\wedge d^{c}|x|^2\wedge (dd^{c}|x|^2)^{n-2}\\
&=& 2^{n-1}\int_{|x|=R, y\in [0,2\pi]^n} T\wedge d^{c}|x|\wedge (dd^{c}|x|)^{n-2}\\
&=& 2^{n-1}\int_{|x|<R, y\in [0,2\pi]^n} T\wedge (dd^{c}|x|)^{n-1}.
\end{eqnarray*}
Since $T$ and $dd^{c}|x|$ are positive it follows that this last integral is increasing with $R$.
The function 
\begin{equation*}
\tilde{y}\mapsto \int_{|x|<R, y\in [0,2\pi]^n} T(x+i(y+\tilde{y}))\wedge (dd^{c}|x|)^{n-1}
\end{equation*}
is constant since $T$ is periodic and it follows that
\begin{equation*}
\int_{|x|<R, y\in [0,2\pi]^n} T(x+iy)\wedge (dd^{c}|x|)^{n-1}=
\int_{|x|<R, y\in [0,2\pi]^n} dd^c R(x)\wedge (dd^{c}|x|)^{n-1},
\end{equation*}
where $R$ is the Ronkin function associated to $f$. It is well-known that $R(x)$ is convex on $\R^n$ and satisfies $R(x)\leq A|x| + B$
for some constants $A$ and $B$. Thus, part (i) follows from Lemma~\ref{bob-lelong}.

\smallskip

(ii): As in the proof of (i) we see that
\begin{equation*}
\int_{|x|<R',\, y\in [0,2\pi]^n} T^r\wedge (dd^{c}|x|^2)^{n-1} =
(2R')^{n-1}\int_{|x|<R',\, y\in [0,2\pi]^n} dd^c R^r(x)\wedge (dd^{c}|x|)^{n-1},
\end{equation*}
where 
\begin{equation*}
R^r(x):=\frac{1}{(2\pi)^n}\int_{\tilde{y}\in [0,2\pi]^n} \log |f(x+r+i\tilde{y})| d\tilde{y}
\end{equation*}
is the translated Ronkin function.
This function is convex and has the same asymptotic behavior at infinity as $R(x)$, i.e., $R^r(x)\leq A|x| + B_r$,
where $A$ is independent of $r$.
Hence, (ii) follows from Lemma~\ref{bob-lelong}.
\end{proof}

\section{The current $\check{T}_f$}\label{sec:T}

Let $f$ be an exponential polynomial in $\C^n$ and let $\Delta_f$ be the associated Newton polytope.
Let, as before, $T_f:=dd^c\log |f|$ be the Lelong current associated with $f$. Recall that since $T_f$ is periodic in each 
imaginary direction with period $2\pi$ we can view $T_f$ as a current on $\C^n/(2\pi i\mathbb{Z})^n$. 
Thus, when considering the action of $T_f$,
or any other current with the same periodicity property, 
on a test form $\xi$ in $\C^n$ we may assume that the projection of $\textrm{supp}\, \xi$ on $i\R^n$ is contained in 
$i[0,2\pi]^n$.
For $r\in \R^n$, let as above $T^r_f$ be the 
translate $T^r_f(z)=T_f(z+r)$. Abusing notation somewhat, we define the current $\check{T}^R_f$ on $\C^n$ by
\begin{equation*}
\check{T}^R_f := \frac{1}{\kappa^{n-1}(R)}\int_{|r|<R}  T_f^r\, dr,
\end{equation*}
where $dr$ is the Lebesgue measure on $\R^n$. In terms of an $(n-1,n-1)$-test form $\xi$ on $\C^n$ this means that
\begin{equation*}
\int\check{T}^R_f \wedge \xi = \frac{1}{\kappa^{n-1}(R)}\int_{|r|<R} \int T_f^r \wedge \xi\, dr
= \frac{1}{\kappa^{n-1}(R)}\int_{|r|<R} \int_{z\in\C^n} T_f(z)\wedge \xi(z-r)\, dr.
\end{equation*}
Notice that $\check{T}_f^R$ is a positive $(1,1)$-current on $\C^n$ that has the same periodicity properties as $T_f$.
Let $\xi$ be a test form with support contained in $\mathbb{B}^n(0,s) + i[0,2\pi]^n$. Then
\begin{eqnarray*}
|\int\check{T}_f^R \wedge \xi| &\leq & \sup |\xi| 
\frac{1}{\kappa^{n-1}(R)}\int_{|r|<R} \int_{\substack{|\real\, z-r|<s \\ \imag\, z\in [0,2\pi]^n}} T_f(z)\wedge (dd^c|z|^2)^{n-1}\, dr \\
&\leq & \sup |\xi|\frac{\textrm{Vol}_n(\mathbb{B}^n(0,s))}{\kappa^{n-1}(R)}\int_{\substack{|\real\, z|<R+s \\ \imag\, z\in [0,2\pi]^n}}
T_f(z)\wedge (dd^c|z|^2)^{n-1} .
\end{eqnarray*}
It thus follows from Theorem~\ref{fund} (i) that the family of currents $\{\check{T}_f^R\}$ is bounded 
and hence has accumulation points.
In particular there is a sequence $\{R_j\}$ tending to $+\infty$ as $j\to \infty$, 
such that $\lim_{j\to \infty} \check{T}^{R_j}_f$ exists and defines a non-negative $(1,1)$-current. 
It is straightforward to check that the coefficients of $\lim_j \check{T}^{R_j}_f$ are independent of $\real\, z$.
We will now see that the \emph{trace mass} (defined below) of $\lim_j \check{T}^{R_j}_f$, as long as it exists, 
is independent of the sequence $\{R_j\}$.

If $\mu$ is a $(1,1)$-current in $\C^n$, periodic in the imaginary directions and
such that the coefficients of $\mu$ are independent of $\real\, z$, then we define the trace mass of $\mu$ as follows:
\begin{equation*}
Tr(\mu):=\int_{\substack{|\real\, z|<1 \\ \imag\, z\in [0,2\pi]^{n}}} \mu(z) \wedge (dd^c|z|^2)^{n-1}.
\end{equation*}
It follows from, e.g., \cite[Proposition~1.14, Ch.\ III, \S 1.B]{agbook} 
that if $\mu$ is non-negative and $Tr(\mu)=0$, then $\mu=0$ on
$\{z; |\real\, z|<1, \imag\, z\in [0,2\pi]^{n}\}$ and hence, $\mu=0$ everywhere.

\begin{theorem}\label{mass}
Let $f$ be an exponential polynomial in $\C^n$ with associated Newton polytope $\Delta_f$. If 
$\lim_{j\to \infty} \check{T}^{R_j}_f$ exists, then
\begin{equation*}
Tr(\lim_{j\to \infty} \check{T}^{R_j}_f) =
n! 4^{n-1} \textrm{Vol}_n(\B^n) V_{n-1}(\Delta_f,\B^n)/\textrm{Vol}_{n-1}(\B^{n-1}).
\end{equation*}
\end{theorem}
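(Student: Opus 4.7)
My plan is to compute $Tr(\check{T}_f^R)$ directly for each $R$, show that it tends to the stated value as $R\to\infty$, and then conclude that $Tr(\lim_j \check{T}_f^{R_j})$ equals this limit. The last step is not automatic since the trace-mass defining form is not smooth, but one can approximate the characteristic cutoff in $x$ by a smooth $\chi_\varepsilon$, use weak convergence against smooth forms for each fixed $\varepsilon$, and then let $\varepsilon\to 0$, using Theorem~\ref{fund}(i) to control the thin-annulus error uniformly in $j$.

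To compute $Tr(\check{T}_f^R)$, I unfold its definition and perform the change of variables $z'=z+r$ inside. Since $|z'-r|^2$ differs from $|z'|^2$ by a pluriharmonic term plus a constant in $z'$, we have $dd^c|z|^2 = dd^c|z'|^2$ after substitution, and Fubini yields
\begin{equation*}
Tr(\check{T}_f^R) = \frac{1}{\kappa^{n-1}(R)} \int_{x'\in\R^n,\, y'\in [0,2\pi]^n} V(x',R)\, T_f(z') \wedge (dd^c|z'|^2)^{n-1},
\end{equation*}
where $V(x',R):=\textrm{Vol}_n(\B^n(0,R)\cap\B^n(x',1))$ equals $\textrm{Vol}_n(\B^n)$ for $|x'|<R-1$, vanishes for $|x'|>R+1$, and is bounded by $\textrm{Vol}_n(\B^n)$ on the intermediate annulus. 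Theorem~\ref{fund}(i) bounds the $T_f\wedge(dd^c|z'|^2)^{n-1}$-mass over $\{|x'|<\rho,\,y'\in[0,2\pi]^n\}$ by $C\rho^{n-1}$, so the annular contribution is $o(R^{n-1})$ after division by $\kappa^{n-1}(R)\sim R^{n-1}\textrm{Vol}_{n-1}(\B^{n-1})$.

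The remaining main term, on $|x'|<R-1$, is evaluated using the two identities appearing in the proof of Theorem~\ref{fund}(i): the pointwise relation $(dd^c|z|^2)^{n-1}=2^{n-1}(dd^c|x|^2)^{n-1}$, and the Stokes-based
\begin{equation*}
\int_{|x|<\rho,\,y\in[0,2\pi]^n}T_f\wedge(dd^c|x|^2)^{n-1} = 2^{n-1}\rho^{n-1}\int_{|x|<\rho,\,y\in[0,2\pi]^n}T_f\wedge(dd^c|x|)^{n-1},
\end{equation*}
whose product produces a factor $4^{n-1}\rho^{n-1}$ in front of $\int_{|x|<\rho}T_f\wedge(dd^c|x|)^{n-1}$. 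Integrating out the $y$-periodicity as in the proof of Theorem~\ref{fund}(i), this last integral equals $\int dd^cR_f\wedge(dd^c|x|)^{n-1}$ over the same region, which tends monotonically to $n!\,V_{n-1}(\Delta_f,\B^n)$ by~\eqref{kul2}. Setting $\rho=R-1$, multiplying by the prefactor $\textrm{Vol}_n(\B^n)$ inherited from $V$, and using $(R-1)^{n-1}/\kappa^{n-1}(R)\to 1/\textrm{Vol}_{n-1}(\B^{n-1})$, the stated formula for $Tr(\check{T}_f)$ drops out.

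The main obstacle is really the initial reduction—passing $Tr$ past the weak limit through a non-smooth cutoff—which is handled by the $\chi_\varepsilon$-approximation above together with the uniform mass bound from Theorem~\ref{fund}(i); the rest is bookkeeping of the factors of $2$ and a direct invocation of \eqref{kul2}.
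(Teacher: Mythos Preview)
Your proof is correct and follows the paper's approach: unfold $Tr(\check{T}_f^{R})$ via Fubini into an integral of $T_f\wedge(dd^c|z|^2)^{n-1}$ over $\{|x'|<R\pm 1\}$, convert to the Ronkin integral via the identities in the proof of Theorem~\ref{fund}(i), and invoke \eqref{kul2}; the paper simply sandwiches $I(R_j)$ between the integrals over $|x'|<R_j\pm 1$ rather than using your main-term-plus-annular-error split, and is in fact more casual than you about interchanging $Tr$ with the weak limit. One small point: the bound $C\rho^{n-1}$ by itself does not force the annular mass to be $o(R^{n-1})$---you need that the ratio in Theorem~\ref{fund}(i) is also \emph{increasing}, hence convergent, which is exactly what the paper's sandwich uses.
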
 

\begin{proof}
Plugging into the definition we get that
\begin{eqnarray*}
Tr(\lim_{j\to \infty} \check{T}^{R_j}_f) &=&
\lim_{j\to \infty} \frac{1}{\kappa^{n-1}(R_j)} \int_{|r|<R} \int_{\substack{|\real\, z-r|<1 \\ \imag\, z\in[0,2\pi]^n}}
T_f(z)\wedge (dd^c |z|^2)^{n-1}\, dr \\
&=:& \lim_{j\to \infty} I(R_j). 
\end{eqnarray*}
A straightforward computation using Fubini's theorem shows that
\begin{equation*}
I(R_j)\leq \frac{\textrm{Vol}_n(\B^n)}{\kappa^{n-1}(R_j)}\int_{\substack{|\real\, z|<R_j+1 \\ \imag\, z\in[0,2\pi]^n}}
T_f(z)\wedge (dd^c|z|^2)^{n-1}.
\end{equation*}
Noticing that $dd^c |z|^2=2dd^c|x|^2$ and computing as in the first part of the proof of Theorem~\ref{fund} we obtain
\begin{eqnarray*}
\int_{\substack{|\real\, z|<R_j+1 \\ \imag\, z\in[0,2\pi]^n}}T_f(z)\wedge (dd^c|z|^2)^{n-1} 
&=& (4(R_j+1))^{n-1} \int_{\substack{|\real\, z|<R_j+1 \\ \imag\, z\in[0,2\pi]^n}}T_f(z)\wedge (dd^c|x|)^{n-1} \\
&=& (4(R_j+1))^{n-1} \int_{\substack{|x|<R_j+1 \\ y\in[0,2\pi]^n}} dd^c R_f(x)\wedge (dd^c|x|)^{n-1},
\end{eqnarray*}
where $R_f$ is the Ronkin function. 
We have thus showed that
\begin{equation*}
I(R_j)\leq \textrm{Vol}_n(\B^n)\frac{4^{n-1}(R_j+1)^{n-1}}{\kappa^{n-1}(R_j)}
\int_{\substack{|x|<R_j+1 \\ y\in [0,2\pi]^n}} dd^c R_f(x)\wedge (dd^c|x|)^{n-1}.
\end{equation*}
In a similar way we also get
\begin{equation*}
I(R_j)\geq \textrm{Vol}_n(\B^n)\frac{4^{n-1}(R_j-1)^{n-1}}{\kappa^{n-1}(R_j)}
\int_{\substack{|x|<R_j-1 \\ y\in [0,2\pi]^n}} dd^c R_f(x)\wedge (dd^c|x|)^{n-1}.
\end{equation*}
By Lemma~\ref{bob-lelong} and the discussion following it, it thus follows that $\lim_{j\to \infty} I(R_j)$ exists and that
\begin{equation}\label{klut}
\lim_{j\to \infty}I(R_j)=\frac{\textrm{Vol}_n(\B^n) 4^{n-1}}{\textrm{Vol}_{n-1}(\B^{n-1})}
\int_{\R^n + i[0,2\pi]^n} dd^c R_f(x)\wedge (dd^c|x|)^{n-1}
\end{equation}
independently of the sequence $\{R_j\}$. 
The theorem thus follows from \eqref{kul2}.
\end{proof}

\subsection{The case $\textrm{dim}\, \Delta_f=1$.}\label{ssec:1dim}

If $\textrm{dim}\, \Delta_f=1$, then as in the introduction we may write 
\begin{equation*}
f(z)=e^{\alpha_0\cdot z} \sum_{j=0}^{\ell} c_j \left(e^{\beta\cdot z}\right)^{k_j}
=e^{\alpha_0\cdot z} P(e^{\beta\cdot z})
\end{equation*}
for some $\alpha_0,\beta \in \mathbb{Z}^n$, some integers $0=k_0<k_1< \cdots<k_{\ell}$, and 
the one-variable polynomial $P(w)=\sum_j c_j w^{k_j}$. Let $\{a_j\}$ be the distinct zeros of $P$
and let $d_j$ be the multiplicity of $a_j$. Then
\begin{equation*}
T_f=\sum_j d_j [\beta\cdot z=\log a_j],
\end{equation*}
where we consider $\log$ as a multivalued function and $[\beta\cdot z=\log a_j]$ is the current of integration
over $\{\beta\cdot z=\log a_j\}$.

\begin{example}
Let $f(z)=1+e^{z_1+z_2}$. Then we may take $\alpha_0=(0,0)$, $\beta=(1,1)$, and $P(w)=1+w$, 
which has the simple zero $w=-1$. In view of the following proposition, which in particular proves \eqref{eq:1dimformel}, we have
\begin{equation*}
\check{T}_f=\frac{dx_1+dx_2}{\sqrt{2}}\wedge [y_1+y_2=\arg(-1)]. 
\end{equation*}
\hfill \qed
\end{example}

\begin{proposition}\label{edgeprop}
Let $f$ be as above and let $\xi$ be a test form with support in $\B^n(0,S)+i[0,2\pi]^n$. Then
\begin{multline*}
\lim_{R\to \infty} \frac{1}{\kappa^{n-1}(R)} \int_{|r|<R} \int_z T_f(z)\wedge \xi(z-r)\, dr \\
= \sum_j d_j \int_{\beta\cdot y=\arg a_j} \xi(x+iy)\wedge (\beta\cdot dx)/|\beta|.
\end{multline*}

Moreover, let $L^M=\{\lambda\beta;\, \lambda\in [-M/|\beta|, M/|\beta|]\}$, let $C$ be an $n-1$-dimensional
cone in the subspace $\{x\in\R^n;\, x\cdot \beta=0\}$, and let $D^M$ be the Minkowski sum of $L^M$ and $C$.
Then, for any fixed $M$ such that
$M>\max_j \{\big|\log |a_j| \big|/|\beta| + S\}$, we have that
\begin{multline*}
\lim_{R\to \infty} \frac{1}{\kappa^{n-1}(R)} \int_{\substack{|r|<R \\ r\in D^M}} \int_z T_f(z)\wedge \xi(z-r)\, dr = \\
\frac{\textrm{Vol}_{n-1}(C\cap\B^{n-1})}{\textrm{Vol}_{n-1}(\B^{n-1})}\sum_j d_j \int_{\beta\cdot y=\arg a_j} \xi(x+iy)\wedge (\beta\cdot dx)/|\beta|.
\end{multline*}
\end{proposition}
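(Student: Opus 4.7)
The plan is to reduce to a standard coordinate system, apply Fubini, and pass to the limit via dominated convergence. First, I would apply an orthogonal transformation mapping $\beta/|\beta|$ to $e_1$ and extend it complex-linearly to $\C^n$; both sides of the identity are invariant under this action, since Lebesgue measure $dr$ and the ball $|r|<R$ are rotation-invariant and $(\beta\cdot dx)/|\beta|$ transforms to $dx_1$. Thus we may assume $\beta = b\, e_1$ with $b=|\beta|$. In these coordinates, setting $a_j^{\ast} := (\log a_j)/b$ with $\log$ multivalued, one has $T_f = \sum_j d_j \sum_k [z_1 = a_j^{\ast} + 2\pi i k/b]$, and the shifted current acts on $\xi$ by
\begin{equation*}
\int T_f^r\wedge \xi \;=\; \sum_{j,k} d_j\, h_j^{(k)}\!\bigl(a_j^{\ast} + 2\pi ik/b - r_1\bigr),
\end{equation*}
where $h_j^{(k)}(w):=\int_{\C^{n-1}} \xi_{I_0,J_0}(w, z')\, dx_2\wedge dy_2\wedge \cdots \wedge dx_n\wedge dy_n$ with $I_0=J_0=(2,\ldots,n)$. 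Each $h_j^{(k)}$ is a smooth compactly supported function on $\C$, and the support condition on $\xi$ forces $\imag(a_j^{\ast}+2\pi ik/b)\in [0,2\pi]$, so for each $j$ only finitely many $k$ contribute.

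Next, Fubini is applied with $r=(r_1,r')$. Since the integrand depends only on $r_1$, the $r'$-integration yields $\textrm{Vol}_{n-1}\!\bigl(\B^{n-1}(0,\sqrt{R^2-r_1^2})\bigr) = (R^2-r_1^2)^{(n-1)/2}\textrm{Vol}_{n-1}(\B^{n-1})$, and after dividing by $\kappa^{n-1}(R)=R^{n-1}\textrm{Vol}_{n-1}(\B^{n-1})$ a factor $(1-r_1^2/R^2)^{(n-1)/2}$ remains. Since each $h_j^{(k)}$ has compact $r_1$-support and $(1-r_1^2/R^2)^{(n-1)/2}\to 1$ uniformly on compacts, dominated convergence gives the limit
\begin{equation*}
\sum_{j,k} d_j \int_{-\infty}^{\infty} h_j^{(k)}\!\bigl(a_j^{\ast} + 2\pi ik/b - r_1\bigr)\, dr_1.
\end{equation*}
The substitution $t=\real a_j^{\ast}-r_1$, combined with the definition of $h_j^{(k)}$, identifies this sum as the integral of $dx_1\wedge\xi$ over the branches of the real hyperplane $\{y_1 = (\arg a_j)/b\}$ lying inside the support of $\xi$. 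Undoing the orthogonal change of coordinates converts $dx_1$ into $(\beta\cdot dx)/|\beta|$ and these hyperplanes into the branches of $\{\beta\cdot y = \arg a_j\}$, with the orientation chosen so that $(\beta\cdot dx)\wedge [\beta\cdot y=\arg a_j]$ is positive, as in the convention of the paper. This gives the first formula.

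The second part is handled by the same argument with $r$ restricted to $D^M$. In the rotated coordinates $D^M$ becomes $[-M,M]\times C$ with $C\subset e_1^{\perp}$, and Fubini now yields an $r'$-integration over $C\cap \B^{n-1}(0,\sqrt{R^2-r_1^2})$. Since $C$ is a cone, this volume equals $(R^2-r_1^2)^{(n-1)/2}\textrm{Vol}_{n-1}(C\cap \B^{n-1})$, producing precisely the extra factor $\textrm{Vol}_{n-1}(C\cap \B^{n-1})/\textrm{Vol}_{n-1}(\B^{n-1})$. The hypothesis $M>\max_j\{|\log|a_j||/|\beta|+S\}$ is exactly what is needed to guarantee that the $r_1$-support of each $h_j^{(k)}(a_j^{\ast}+2\pi ik/b-r_1)$ lies in $[-M,M]$, so that restricting to $r_1\in[-M,M]$ does not truncate the limit. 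The main technical obstacle is the bookkeeping of orientations and normalizations needed to land on the form $(\beta\cdot dx)/|\beta|$ with the correct sign, and to track the branches of the multivalued logarithm through the support constraints on $\xi$; once these are sorted out the argument reduces to Fubini plus dominated convergence.
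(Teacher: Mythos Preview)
Your argument is correct and follows essentially the same route as the paper: an orthogonal change of coordinates aligning $\beta$ with a coordinate axis, Fubini to separate the $\beta$-direction from its orthogonal complement (where the cone homogeneity produces the factor $(R^2-r_1^2)^{(n-1)/2}\,\textrm{Vol}_{n-1}(C\cap\B^{n-1})$), and dominated convergence to pass to the limit, with the hypothesis on $M$ used exactly as you say. The only cosmetic differences are that the paper sends $\beta/|\beta|$ to $e_n$ rather than $e_1$, and it proves the second statement first and obtains the first as the special case $C=\beta^{\perp}$, whereas you do them in the opposite order.
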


Notice that it is part of the statement that the limits exist. We choose the orientation
of $\{\beta\cdot y=\arg a_j\}$ so that the current $d(\beta \cdot x)\wedge[\beta\cdot y=\arg a_j]$ becomes 
positive.

\begin{proof}
The first part follows from the second; simply take $C=\{x\in\R^n;\, x\cdot \beta=0\}$.
To prove the second part,
recall that $T_f=\sum_j d_j [\beta\cdot z=\log a_j]$, where we consider $\log$ multivalued. 
It is thus sufficient to prove that 
\begin{multline*}
\lim_{R\to \infty} \frac{1}{\kappa^{n-1}(R)} \int_{\substack{|r|<R \\ r\in D^M}} \int_{\beta\cdot z=\log a_j} \xi(z-r)\, dr = \\
\frac{\textrm{Vol}_{n-1}(C\cap\B^{n-1})}{\textrm{Vol}_{n-1}(\B^{n-1})}\int_{\beta\cdot y=\arg a_j} \xi(x+iy)\wedge (\beta\cdot dx)/|\beta|,
\end{multline*}
where we consider $\arg$ multivalued.
To compute the integral on the left hand side we make the following change of coordinates: Let $\omega_j$,
$j=1,\ldots,n-1$ be orthonormal vectors in the subspace $\{x\in\R^n;\, x\cdot\beta=0\}$ such that
$(\omega_1,\ldots,\omega_{n-1},\beta/|\beta|)$ is a positively oriented orthonormal basis for $\R^n$.
Let $B$ be the matrix with $\omega_1,\ldots,\omega_{n-1}$ as its first $n-1$ rows
and with $\beta/|\beta|$ as its last. We then put $\zeta=Bz$ and $\rho=Br$. We get that
$D^M=\{\rho\in\R^n;\, |\rho_n|<M, \, \rho'\in C'\}$, where $\rho'=(\rho_1,\ldots,\rho_{n-1})$ and 
$C'$ is a cone in $\R^{n-1}$ such that $\textrm{Vol}_{n-1}(C'\cap\B^{n-1})=\textrm{Vol}_{n-1}(C\cap\B^{n-1})$.
We thus get
\begin{multline}\label{korv}
\lim_{R\to \infty} \frac{1}{\kappa^{n-1}(R)} \int_{\substack{|r|<R \\ r\in D^M}} \int_{\beta\cdot z=\log a_j} \xi(z-r)\, dr  \\
= \lim_{R\to \infty} \frac{1}{\kappa^{n-1}(R)} \int_{\substack{|\rho|<R \\ |\rho_n|<M \\ \rho'\in C'}} 
\int_{\zeta_n |\beta|=\log a_j} \xi(\zeta-\rho)\, d\rho.  
\end{multline}
Letting $\zeta'=(\zeta_1,\ldots,\zeta_{n-1})$, we can rewrite the inner integral on the right hand side as
\begin{equation*}
\int_{\zeta'\in\C^n} \xi(\zeta'-\rho',|\beta|^{-1}\log a_j - \rho_n) = 
\int_{\zeta'\in\C^n} \xi(\zeta',|\beta|^{-1}\log a_j - \rho_n)
\end{equation*}
showing that it is independent of $\rho'$. A straightforward computation using Fubini's theorem then
shows that the right hand side of \eqref{korv} equals
\begin{equation*}
\lim_{R\to \infty} \frac{\textrm{Vol}_{n-1}(C'\cap\B^{n-1})}{\kappa^{n-1}(R)} \int_{|\rho_n|<M}  (R^2-\rho_n^2)^{(n-1)/2}
\int_{\zeta'\in\C^n} \xi(\zeta',|\beta|^{-1}\log a_j - \rho_n)\, d\rho_n,
\end{equation*}
which by, e.g., dominated convergence equals
\begin{equation}\label{korv2}
\frac{\textrm{Vol}_{n-1}(C'\cap\B^{n-1})}{\textrm{Vol}_{n-1}(\B^{n-1})} \int_{|\rho_n|<M}
\int_{\zeta'\in\C^n} \xi(\zeta',|\beta|^{-1}\log a_j - \rho_n)\, d\rho_n.
\end{equation}
Since the inner integral in \eqref{korv2} is $0$ if $||\beta|^{-1}\log |a_j| - \rho_n|>S$
it follows from the choice of $M$ that the requirement $|\rho_n|<M$ in \eqref{korv2} 
is superfluous; we may just as well integrate over all of $\R$. Letting $t=|\beta|^{-1}\log |a_j| - \rho_n$
we see that \eqref{korv2} equals
\begin{multline*}
\frac{\textrm{Vol}_{n-1}(C'\cap\B^{n-1})}{\textrm{Vol}_{n-1}(\B^{n-1})} \int_{t\in\R}
\int_{\zeta'\in\C^n} \xi(\zeta',t+i\arg a_j/|\beta|)\, d t \\
=\frac{\textrm{Vol}_{n-1}(C'\cap\B^{n-1})}{\textrm{Vol}_{n-1}(\B^{n-1})} \int_{\imag\, \zeta_n=\arg a_j/|\beta|} \xi(\zeta)\wedge d(\real\, \zeta_n).
\end{multline*}
Changing back to the original $z$-coordinates and recalling that $\textrm{Vol}_{n-1}(C'\cap\B^{n-1})=\textrm{Vol}_{n-1}(C\cap\B^{n-1})$
the result follows.
\end{proof}

Proposition~\ref{edgeprop} allows us to make the following definition for exponential polynomials with
one-dimensional Newton polytope.
\begin{equation*}
\check{T}_f=\lim_{R\to \infty} \frac{1}{\kappa^{n-1}(R)}\int_{|r|<R} T_f^r\, dr.
\end{equation*}
Moreover, with $f$ given as above we get the explicit formula \eqref{eq:1dimformel}.

\subsection{The case $\textrm{dim}\, \Delta_f > 1$.}\label{ssec:kil} 
The purpose of this section
is to prove Theorem~\ref{Thuttsats}' below. Together with Proposition~\ref{edgeprop} it shows that 
$\check{T}_f:=\lim_{R\to\infty} (1/\kappa^{n-1}(R)) \int_{|r|<R} T_f^r\, dr$ exists
and is given by a quite explicit formula in terms of $\Delta_f$. 
Recall that $f(z)=\sum_{\alpha\in A}c_{\alpha}e^{\alpha\cdot z}$ and
that $\Delta_f=\textrm{conv}\, A$. For each edge $\Gamma$ of $\Delta_f$ we let, as in the introduction, 
\begin{equation*}
f_{\Gamma}(z)=\sum_{\alpha\in A\cap \Gamma} c_{\alpha}e^{\alpha\cdot z}.
\end{equation*}

\begin{example}\label{ex}
Let $f(z)=1+e^z_1+e^z_2$, cf.\ Figure~\ref{figur1}. Then $\Delta_f$ is the convex hull of $\{(0,0), (1,0), (0,1)\}$,
which has the three edges $\Gamma_1=\textrm{conv}\{(0,0), (1,0)\}$, $\Gamma_2=\textrm{conv}\{(0,0), (0,1)\}$, 
and $\Gamma_3=\textrm{conv}\{(1,0), (0,1)\}$. The external angle of $\Gamma_j$ is $1/2$;
indeed, if $\Gamma$ is any edge of any $2$-dimensional polytope in $\R^2$, then the external angle 
of $\Gamma$ is $1/2$. We have that
\begin{equation*}
f_{\Gamma_1}(z)=1+e^{z_1}, \quad f_{\Gamma_2}(z)=1+e^{z_2}, \quad f_{\Gamma_3}(z)=e^{z_1} + e^{z_2},
\end{equation*}
and by Theorem~\ref{Thuttsats} and \eqref{eq:1dimformel},
\begin{eqnarray*}
\check{T}_f &=& \frac{1}{2} \check{T}_{f_{\Gamma_1}} + \frac{1}{2} \check{T}_{f_{\Gamma_2}}
+\frac{1}{2} \check{T}_{f_{\Gamma_3}} \\
&=&
\frac{1}{2}\Big(dx_1\wedge [y_1=\arg(-1)] + dx_2\wedge [y_2=\arg(-1)] + \\
& & + \frac{dx_2-dx_1}{\sqrt{2}}\wedge [y_2-y_1=\arg(-1)]\Big). 
\end{eqnarray*}
\hfill \qed
\end{example}

We now fix an edge $\Gamma$ of $\Delta_f$. As in the introduction and the previous section, we write 
$\Gamma\cap A=\{\alpha_0,\ldots,\alpha_0+k_{\ell}\beta\}$,
$f_{\Gamma}(z)=e^{\alpha_0\cdot z} P(e^{\beta\cdot z})$, and we let $\{a_j\}$ be the distinct zeros of $P$ with multiplicities
$\{d_j\}$. Let $L_M=\{\lambda\beta;\, -M/|\beta|\leq\lambda\leq M /|\beta|\}$, let $N$ be the normal cone of $\Gamma$,
let $N^{\epsilon}$ be the corresponding smaller cone, cf.\ \eqref{Nepsilon}, and 
let $D_M^{\epsilon}=N^{\epsilon}+L_M$ be the Minkowski sum.

\begin{proposition}\label{edgeprop2}
Let $f$ and $f_{\Gamma}$ be as above and let $\xi$ be a test form in $\C^n$. Then, for $\epsilon>0$ sufficiently small and 
any $M>0$ we have
\begin{multline*}
\lim_{R\to\infty} \frac{1}{\kappa^{n-1}(R)}\int_{\substack{|r|<R \\ r\in D_M^{\epsilon}}}
\int_z T_f(z)\wedge \xi(z-r)\, dr \\
= \lim_{R\to\infty} \frac{1}{\kappa^{n-1}(R)}\int_{\substack{|r|<R \\ r\in D_M^{\epsilon}}}
\int_z T_{f_{\Gamma}}(z)\wedge \xi(z-r)\, dr.
\end{multline*}
\end{proposition}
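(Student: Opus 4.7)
The plan is to compare the translated currents $T_f^r$ and $T_{f_\Gamma}^r$ directly when the component of $r$ in the normal cone is large, and then deduce the statement from Proposition~\ref{edgeprop} applied to $f_\Gamma$ (which has one-dimensional Newton polytope). Because $\beta$ is parallel to $\Gamma$ and $N_\Gamma$ is perpendicular to the affine span of $\Gamma$, every $r \in D_M^\epsilon$ decomposes orthogonally as $r = r_N + r_L$ with $r_N \in N^\epsilon$ and $r_L \in L_M$. Splitting the sum defining $f$ according to $A = (\Gamma \cap A) \sqcup (A \setminus \Gamma)$ and using $\alpha \cdot r_N = \alpha_0 \cdot r_N$ for every $\alpha \in \Gamma \cap A$, a direct computation gives
\begin{equation*}
f(w + r) = e^{\alpha_0 \cdot r_N}\bigl[f_\Gamma(w + r_L) + H_{r_N, r_L}(w)\bigr], \qquad f_\Gamma(w + r) = e^{\alpha_0 \cdot r_N} f_\Gamma(w + r_L),
\end{equation*}
where $H_{r_N, r_L}(w) = \sum_{\alpha' \in A \setminus \Gamma} c_{\alpha'} e^{(\alpha' - \alpha_0) \cdot r_N} e^{\alpha' \cdot (w + r_L)}$. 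The defining inequality \eqref{Nepsilon} gives $(\alpha' - \alpha_0) \cdot r_N \leq -\epsilon|r_N||\alpha'-\alpha_0|$, hence $|H_{r_N, r_L}(w)| \leq C e^{-c|r_N|}$ uniformly for $w$ in the support of $\xi$ and $r_L \in L_M$, with $c = c(\epsilon, A) > 0$.

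Applying $dd^c_w \log|\cdot|$ kills the prefactor and yields $T_f^r(w) = dd^c_w\log|f_\Gamma(w+r_L) + H_{r_N, r_L}(w)|$ and $T_{f_\Gamma}^r = T_{f_\Gamma}^{r_L}$. After the change of variables $w' = w + r_L$, which carries the supports of the varying test forms into a common compact set, the problem reduces to showing that $\log|f_\Gamma + \tilde H_{r_N, r_L}| \to \log|f_\Gamma|$ in $L^1_{\mathrm{loc}}$, uniformly in $r_L \in L_M$, as $|r_N| \to \infty$. This is the standard fact that uniform convergence of a family of holomorphic functions to a fixed holomorphic function implies $L^1_{\mathrm{loc}}$ convergence of their log-moduli, applied with a quantitative rate depending only on $\|\tilde H_{r_N, r_L}\|_\infty \lesssim e^{-c|r_N|}$ and on $f_\Gamma$ itself. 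Integration by parts against $dd^c(\xi(\cdot - r_L))$, which is uniformly bounded in $r_L$, then gives
\begin{equation*}
\sup_{r_L \in L_M}\Bigl|\int_z (T_f^r - T_{f_\Gamma}^r) \wedge \xi\Bigr| \longrightarrow 0 \quad \text{as } |r_N| \to \infty.
\end{equation*}

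For the averaging, I would split the domain $\{|r| < R,\ r \in D_M^\epsilon\}$ according to whether $|r_N| \leq R_0$ or $|r_N| > R_0$. On the first piece Theorem~\ref{fund}~(ii) gives a uniform bound on the inner integrand while the volume is $O(R_0^{n-1})$, so the contribution divided by $\kappa^{n-1}(R)$ vanishes in the limit. On the second piece the inner integrand is bounded by any preassigned $\delta > 0$ provided $R_0$ is large enough, while the volume is $O(R^{n-1})$, giving a contribution of order $\delta$. Letting first $R \to \infty$ and then $\delta \to 0$ shows
\begin{equation*}
\lim_{R \to \infty} \frac{1}{\kappa^{n-1}(R)} \int_{\substack{|r| < R \\ r \in D_M^\epsilon}} \int_z (T_f^r - T_{f_\Gamma}^r) \wedge \xi \, dr = 0,
\end{equation*}
and combined with Proposition~\ref{edgeprop} applied to $f_\Gamma$ with $C = N^\epsilon$ (which furnishes the existence of the $T_{f_\Gamma}$-limit) this establishes the proposition. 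The main obstacle is obtaining uniformity in $r_L$ of the $L^1_{\mathrm{loc}}$-convergence, which is delicate on account of the zeros of $f_\Gamma$; the change of variables $w' = w + r_L$ reduces it to a single function $f_\Gamma$ perturbed by a family of uniformly small holomorphic terms, after which a standard quantitative pluripotential-theoretic bound provides the required uniform rate.
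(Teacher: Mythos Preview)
Your argument is correct and follows the same overall architecture as the paper (isolate the region where $|r_N|$ is large, compare $T_f^r$ with $T_{f_\Gamma}^r$ there, handle the bounded part via Theorem~\ref{fund}~(ii), and appeal to Proposition~\ref{edgeprop} for existence of the $f_\Gamma$ limit), but the mechanism you use for the comparison is genuinely different from the paper's. The paper proves two auxiliary lemmas: Lemma~\ref{uppsklemma} shows $|f-f_\Gamma|<|f_\Gamma|$ outside an exponentially shrinking tube around $f_\Gamma^{-1}(0)$, and Lemma~\ref{deltalemma} is a Rouch\'e-type estimate that bounds $\int(T_h-T_{\tilde h})\wedge\xi$ by $\delta$ times a volume term whenever $h$ and $\tilde h$ agree outside a $\delta$-tube. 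This yields an explicit exponential bound $|\int(T_f-T_{f_\Gamma})\wedge\xi(\cdot-r)|\lesssim e^{-\epsilon'|r|/d}$, which is then integrated directly.

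You instead integrate by parts to reduce to an $L^1_{\mathrm{loc}}$ statement about $\log|f_\Gamma+\tilde H_{r_N}|-\log|f_\Gamma|$, and then invoke the standard pluripotential fact that uniform convergence of holomorphic functions to a fixed (not identically zero) function implies $L^1_{\mathrm{loc}}$ convergence of the log-moduli. One remark that would streamline your write-up: after your change of variables $w'=w+r_L$ the perturbation $\tilde H$ becomes $\sum_{\alpha'\in A\setminus\Gamma} c_{\alpha'}e^{(\alpha'-\alpha_0)\cdot r_N}e^{\alpha'\cdot w'}$, which depends only on $r_N$, so the ``uniformity in $r_L$'' you flag as delicate is in fact automatic --- the only $r_L$-dependence sits in the translated test form $\xi(\cdot-r_L)$, whose $C^2$-norm and support are uniformly controlled. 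With that observation your route is shorter and avoids the two lemmas; the paper's route, on the other hand, gives a quantitative decay rate that your soft argument does not.
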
 

In the following lemma we will, for every $x\in D_M^{\epsilon}$, write $x=x'+x''$ for (unique)
$x'\in N^{\epsilon}$ and $x''\in L_M$.

\begin{lemma}\label{uppsklemma}
There are constants $K>0$ and $d>0$ independent of $\epsilon >0$ such that the following holds:
If $z\in \C^n$ is such that $\real\, z =x=x'+x''\in D_M^{\epsilon}$ and 
\begin{equation}\label{stor}
\big|\beta\cdot z - \log a_j\big|\geq K^{1/d} e^{-\epsilon |x'|/d}, \quad \forall j,
\end{equation}
where we consider $\log$ as a multivalued function, then $|f(z)-f_{\Gamma}(z)|<|f_{\Gamma}(z)|$.
\end{lemma}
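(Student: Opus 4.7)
The plan is to prove
\[
|f(z)-f_\Gamma(z)|\le C_1\,e^{\alpha_0\cdot x}\,e^{-\epsilon|x'|}\quad\text{and}\quad |f_\Gamma(z)|\ge C_3 K\, e^{\alpha_0\cdot x}\,e^{-\epsilon|x'|}
\]
with positive constants $C_1,C_3$ independent of $\epsilon$ (though possibly depending on $f$, $\Gamma$ and $M$), and then choose $K>C_1/C_3$ together with $d:=\deg P=\sum_j d_j$. The resulting pair $(K,d)$ is then manifestly $\epsilon$-independent.

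For the upper bound, I would write $f(z)-f_\Gamma(z)=\sum_{\alpha\in A\setminus\Gamma}c_\alpha e^{\alpha\cdot z}$ and estimate each exponential separately. The defining inequality \eqref{Nepsilon} of $N^\epsilon=N^\epsilon_\Gamma$, applied with $\alpha_0\in A\cap\Gamma$ and $\alpha\in A\setminus\Gamma$, gives $x'\cdot(\alpha_0-\alpha)\ge\epsilon|x'|\,|\alpha_0-\alpha|\ge\epsilon|x'|$, where the last step uses that distinct lattice points are at distance at least $1$. Combined with the elementary bound $|(\alpha-\alpha_0)\cdot x''|\le|\alpha-\alpha_0|\,M$ coming from $x''\in L_M$, this yields $\alpha\cdot x\le\alpha_0\cdot x-\epsilon|x'|+C(A,M)$. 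Summing over the finitely many $\alpha\in A\setminus\Gamma$ produces the claimed bound with $C_1$ independent of $\epsilon$.

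For the lower bound, the essential geometric observation is that $\beta$ is parallel to the edge $\Gamma$ whereas $x'\in N_\Gamma$ is orthogonal to $\Gamma$, so $\beta\cdot x'=0$ and consequently $\real(\beta\cdot z)=\beta\cdot x''$ lies in $[-|\beta|M,|\beta|M]$. Thus $\beta\cdot z$ varies over a vertical strip in $\C$ which, after quotienting by the $2\pi i\mathbb{Z}$-period of $e^\zeta$, becomes a compact cylinder. On this cylinder the continuous function
\[
\zeta\;\longmapsto\;\frac{|e^\zeta-a_j|}{\mathrm{dist}(\zeta,\,\log a_j+2\pi i\mathbb{Z})}
\]
has its only zero (of both numerator and denominator) at the image of $\log a_j$, and Taylor expansion $e^\zeta-a_j=a_j(\zeta-\log a_j)+O((\zeta-\log a_j)^2)$ shows the ratio extends continuously by $|a_j|>0$ there; hence by compactness it admits a positive lower bound $\tilde c_j>0$. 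Since the hypothesis furnishes $\mathrm{dist}(\beta\cdot z,\log a_j+2\pi i\mathbb{Z})\ge\delta:=K^{1/d}e^{-\epsilon|x'|/d}$, factoring the polynomial $P(w)=c_\ell\prod_j(w-a_j)^{d_j}$ gives
\[
|f_\Gamma(z)|=e^{\alpha_0\cdot x}|P(e^{\beta\cdot z})|\ge e^{\alpha_0\cdot x}|c_\ell|\prod_j(\tilde c_j\delta)^{d_j}=C_3K\,e^{\alpha_0\cdot x}\,e^{-\epsilon|x'|}
\]
with $C_3=|c_\ell|\prod_j\tilde c_j^{d_j}$ independent of $\epsilon$.

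Choosing $K>C_1/C_3$ then forces $|f_\Gamma(z)|>|f(z)-f_\Gamma(z)|$, completing the proof. The main technical point will be the uniform positive lower bound $\tilde c_j$, and that rests squarely on the compactness of the cylinder; without the identity $\beta\cdot x'=0$ forced by $\beta$ being parallel to $\Gamma$ and $x'\perp\Gamma$, $\real(\beta\cdot z)$ could grow with $|x'|$ and the argument would collapse.
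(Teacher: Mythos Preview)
Your proof is correct and follows essentially the same strategy as the paper: bound $|f-f_\Gamma|$ from above using the defining inequality of $N_\Gamma^\epsilon$ and $|\alpha-\alpha_0|\ge 1$, bound $|f_\Gamma|$ from below via a compactness argument exploiting $\beta\cdot x'=0$, and then choose $K$ large. The only notable difference is in the packaging of the lower bound: the paper first reduces to $\imag\,z\in[0,2\pi]^n$ so that $\beta\cdot z$ lies in a compact rectangle $Q$, then factors $\prod_j(e^\zeta-a_j)^{d_j}=\psi(\zeta)\prod(\zeta-\log a_j)^{d_j}$ with $\psi$ non-vanishing on $Q$, obtaining a $d$ that counts each $a_j$ once for every branch of $\log a_j$ falling in $Q$; you instead pass directly to the compact cylinder $[-|\beta|M,|\beta|M]\times(\R/2\pi\mathbb{Z})$ and bound the ratio $|e^\zeta-a_j|/\mathrm{dist}(\zeta,\log a_j+2\pi i\mathbb{Z})$ from below, which yields the intrinsic value $d=\deg P$. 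Your version is arguably cleaner and gives a smaller $d$, but the two arguments are equivalent in substance.
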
 

\begin{remark}
Notice that \eqref{stor} precisely means that the distance from $z$ to $f_{\Gamma}^{-1}(0)$ 
is greater than or equal to $K^{1/d} e^{-\epsilon |x'|/d}/|\beta|$.
\end{remark}

\begin{proof}
Since $f$ and $f_{\Gamma}$ are periodic in the imaginary directions we may assume that $\imag\, z\in [0,2\pi]^n$.

The image of $D_M^{\epsilon}+i[0,2\pi]^n$ under the map $z\mapsto \beta\cdot z$ is a set $Q\subset \C$ of the form
\begin{equation*}
Q:=M|\beta| [-1, 1] + i 2\pi [k_1,k_2]
\end{equation*}
for some $k_j\in \mathbb{Z}$. 
Hence, for $z\in D_M^{\epsilon}+i[0,2\pi]^n$ we have that
\begin{multline}\label{rot}
f_{\Gamma}(z)=e^{\alpha_0\cdot z} P(e^{\beta\cdot z}) = e^{\alpha_0\cdot z} C\prod_j (e^{\beta\cdot z}- a_j)^{d_j} \\
= e^{\alpha_0\cdot z} C\prod (\beta\cdot z-\log a_j)^{d_j}\cdot \psi(\beta\cdot z),
\end{multline}
where $C\in\C$ is a constant, $\psi$ is a holomorphic non-vanishing function in a neighborhood of $Q$, 
and the last product is over all $j$ and all values of $\log a_j$ such that $\log a_j\in Q$. Let $d$ be the sum of the
associated $d_j$'s and let $m=\inf_Q |\psi|>0$.
From \eqref{rot} and \eqref{stor} it thus follows that
\begin{equation*}
|f_{\Gamma}(z)| \geq e^{\alpha_0\cdot x} m|C| \prod K^{d_j/d} e^{-\epsilon |x'|d_j/d}=
e^{\alpha_0\cdot x} m|C| K e^{-\epsilon |x'|}.
\end{equation*}

Since $x'\in N^{\epsilon}$, we have 
$x'\cdot (\alpha_0-\alpha)\geq \epsilon |x'| |\alpha_0-\alpha|\geq \epsilon |x'|$ since 
$|\alpha_0-\alpha|\geq 1$ for any $\alpha\in A\setminus \Gamma$. Hence,

\begin{eqnarray*}
\left|\frac{f(z)-f_{\Gamma}(z)}{e^{\alpha_0\cdot z}}\right| &\leq &
\sum_{\alpha\in A\setminus \Gamma} |c_{\alpha}| e^{(\alpha-\alpha_0)\cdot x} \leq
e^{-\epsilon |x'|} \sum_{\alpha\in A\setminus \Gamma} |c_{\alpha}| e^{(\alpha-\alpha_0)\cdot x''} \\
&\leq & 
e^{-\epsilon |x'|} \sum_{\alpha\in A\setminus \Gamma} |c_{\alpha}| e^{M|\alpha-\alpha_0|}.
\end{eqnarray*}
The lemma therefore follows if we take
\begin{equation*}
K>\frac{1}{m|C|} \sum_{\alpha\in A\setminus \Gamma} |c_{\alpha}| e^{M|\alpha-\alpha_0|}.
\end{equation*}
\end{proof}

\begin{lemma}\label{deltalemma}
Let $D=D'\times D''\subset \C^{n-1}\times\C$ be a domain and let $h, \tilde{h}\in \hol(\overline{D})$. Assume that
$\tilde{h}(z)/z_n^k$ is a non-vanishing holomorphic function
and that $|h(z)-\tilde{h}(z)|<|\tilde{h}(z)|$ for all $z\in D$ such that $|z_n|\geq \delta$.
Then, if $\delta>0$ is sufficiently small and $\xi$ is a test form in $D$, 
\begin{equation*}
\left|\int T_h\wedge \xi - \int T_{\tilde{h}} \wedge \xi \right| \leq
\delta C_{\xi} \int_{h^{-1}(0)\cap \pi^{-1}(\pi (\textrm{supp}\, \xi))} (dd^c |z|^2)^{n-1},
\end{equation*}
where $C_{\xi}$ is a constant depending on the $C^1$-norm of $\xi$ and $\pi\colon D'\times D''\to D'$ is the 
natural projection.
\end{lemma}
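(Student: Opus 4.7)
I would reduce the estimate to a direct geometric comparison of the zero hypersurfaces. By Rouch\'e's theorem applied in each fibre $\{z'\}\times D''$, the hypothesis $|h-\tilde h|<|\tilde h|$ on $\{|z_n|\geq\delta\}$, together with the fact that $\tilde h=z_n^k g$ with $g$ non-vanishing, forces all zeros of $h(z',\cdot)$ to lie in $\{|z_n|<\delta\}$ with total multiplicity $k$. Hence $V:=h^{-1}(0)$ is a $k$-sheeted branched cover of $D'$ contained in $D'\times\{|z_n|<\delta\}$, and the Poincar\'e--Lelong formula gives $T_h=[V]$ and $T_{\tilde h}=k[\{z_n=0\}]$. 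The task becomes bounding
\begin{equation*}
\Delta:=\int_V\xi - k\int_{\{z_n=0\}}\xi.
\end{equation*}

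Next I would decompose $\xi=\xi'+\xi''$, where $\xi'=\xi_0(z)\,dz_1\wedge\cdots\wedge dz_{n-1}\wedge d\bar z_1\wedge\cdots\wedge d\bar z_{n-1}$ contains no $dz_n$ or $d\bar z_n$, while every monomial of $\xi''$ contains at least one of $dz_n,d\bar z_n$. Since $dz_n$ and $d\bar z_n$ pull back to zero under the inclusion $\{z_n=0\}\hookrightarrow D$, the $\xi''$-contribution to the second integral in $\Delta$ vanishes. For the $\xi'$ part, locally representing $V$ as a union of graphs $z_n=a_i(z')$ with multiplicities $n_i$, $\sum_i n_i=k$, and $|a_i(z')|<\delta$, one gets
\begin{equation*}
\int_V\xi'-k\int_{\{z_n=0\}}\xi'=\sum_i n_i\int_{D'}\bigl[\xi_0(z',a_i(z'))-\xi_0(z',0)\bigr]\,dz_1\wedge\cdots\wedge d\bar z_{n-1},
\end{equation*}
which by the mean value theorem is bounded by $\delta\|\xi_0\|_{C^1}$ times the projected area, and hence (using Wirtinger's comparison of the mass of an analytic hypersurface with the area of its projection) by $\delta\, C_\xi\int_V(dd^c|z|^2)^{n-1}$.

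The main obstacle is the $\xi''$ part, because naively restricting $dz_n$ to $V$ produces $da_i$, whose size is not directly controlled by $|a_i|<\delta$. The remedy is to integrate by parts on $V$, using that $[V]$ is $d$-closed as a current. For a term $dz_n\wedge\alpha$ with $\alpha$ involving only $dz',d\bar z'$, write $dz_n\wedge\alpha=d(z_n\alpha)-z_n\,d\alpha$; the exact piece pairs trivially with $[V]$, leaving
\begin{equation*}
\int_V dz_n\wedge\alpha = -\int_V z_n\,d\alpha,
\end{equation*}
which is bounded by $\delta\,\|d\alpha\|_\infty\int_V(dd^c|z|^2)^{n-1}$ since $|z_n|<\delta$ on $V$. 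The terms $d\bar z_n\wedge\beta$ and $dz_n\wedge d\bar z_n\wedge\gamma$ are handled analogously (the latter by iterating the identity twice), each ultimately producing a factor $|z_n|$ or $|\bar z_n|<\delta$ in the integrand. Summing all contributions yields the desired estimate, with $C_\xi$ absorbing the $C^1$-norm of $\xi$.
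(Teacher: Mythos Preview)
Your argument is correct and follows essentially the same strategy as the paper: Rouch\'e in each fibre to confine $h^{-1}(0)$ to $\{|z_n|<\delta\}$ with the right multiplicity, a splitting of $\xi$ according to whether $dz_n,d\bar z_n$ appear, and the integration-by-parts trick $\int_V dz_n\wedge\alpha=-\int_V z_n\,d\alpha$ exploiting $d$-closedness of $[V]$ for the pieces containing $dz_n$ or $d\bar z_n$. The only organizational difference is in the treatment of your $\xi'$: the paper Taylor-expands the coefficient in $z_n$, writing $\xi'=\xi_0+z_n\xi_3+\bar z_n\xi_4$ with $\xi_0$ \emph{independent} of $z_n$, and then uses the clean pushforward identity $(\pi|_{h^{-1}(0)})_*T_h=T_{\tilde h}$ to get exact equality for $\xi_0$; your mean-value argument on local graphs is the same thing in disguise, though the pushforward formulation avoids having to speak of the branches $a_i(z')$ near the ramification locus. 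Note also that for the $dz_n\wedge d\bar z_n\wedge\gamma$ term a single integration by parts already suffices, since $z_n\,d\bar z_n\wedge d\gamma$ has ambient-coordinate coefficients bounded by $\delta\|\xi\|_{C^1}$ on $V$; there is no need to eliminate the remaining $d\bar z_n$.
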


\begin{proof}
Notice first that the zero set $Z$ of $h$ is within a distance $<\delta$ from $\{\zeta_n=0\}=\tilde{h}^{-1}(0)$.
In particular, $\pi\restriction_{Z}$ is proper. Moreover, from Rouche's theorem it follows that for each fixed
$z'\in D'$, the function $z_n\mapsto h(z',z_n)$ has precisely $k$ zeros with multiplicity. Thus,
$(\pi\restriction_Z)_* T_h = k[\zeta_n=0]=T_{\tilde{h}}$, which means that if $\tilde{\xi}$ is a test form in $D'$
and $\xi_0:=\pi^*\tilde{\xi}$ then
\begin{equation}\label{strut}
\int T_h\wedge\xi_0=\int T_{\tilde{h}}\wedge \xi_0.
\end{equation}

Let now $\xi$ be an arbitrary test form in $D$. By Taylor expanding in the $z_n$-direction we may write
\begin{equation}\label{strut2}
\xi=\xi_0 + \xi_1\wedge dz_n + \xi_2\wedge d\bar{z}_n + z_n\xi_3 + \bar{z}_n \xi_4,
\end{equation}
where the $\xi_j$'s are smooth and have the following properties: $\xi_0$ is independent of $z_n$, contains no
$dz_n$ or $d\bar{z}_n$, and has support in $\pi^{-1}(\pi (\textrm{supp}\, \xi))$; $\xi_1$ and $\xi_2$ are 
test forms in $D$ with the same support as $\xi$; $\xi_3$ and $\xi_4$ contain no $dz_n$ or $d\bar{z}_n$
and have supports in $\pi^{-1}(\pi (\textrm{supp}\, \xi))$. Moreover, the $C^0$-norms of $\xi_0$, $\xi_3$, and $\xi_4$ 
can be estimated by the 
$C^1$-norm of $\xi$, whereas the $C^0$-norm of $\xi_1$ and $\xi_2$ can be estimated by the $C^0$-norm of $\xi$. 

Notice that \eqref{strut} holds for $\xi_0$ as in \eqref{strut2}. Now,
\begin{equation*}
\left|\int T_h\wedge z_n\xi_3\right|\leq k\int_{h=0} |z_n\xi_3| (dd^c |z|^2)^{n-1} \leq
\delta C_{\xi_3} \int_{h^{-1}(0)\cap \pi^{-1}(\pi (\textrm{supp}\, \xi))} (dd^c |z|^2)^{n-1},
\end{equation*}
and a similar estimate holds for $\bar{z}_n\xi_4$. To see that $\xi_1\wedge dz_n$ and $\xi_2\wedge d\bar{z}_n$
also satisfy a similar estimate we use that $T_h$ is $d$-closed so that
\begin{equation*}
\int T_h\wedge \xi_1\wedge dz_n = \pm \int T_h \wedge z_n d\xi_1
\end{equation*}
and similarly for $\xi_2\wedge d\bar{z}_n$. The lemma follows.
\end{proof}

\begin{proof}[Proof of Proposition~\ref{edgeprop2}]
By periodicity of $f$ and $f_{\Gamma}$ we may assume that $\xi$ has support in $\B(0,S)+i[0,2\pi]^n$ for some
$S>0$. Notice that it is sufficient to prove that
\begin{multline}\label{penna}
\lim_{R\to\infty} \frac{1}{\kappa^{n-1}(R)}\int_{\substack{R'<|r|<R \\ r\in D_M^{\epsilon}}}
\int_z T_f(z)\wedge \xi(z-r)\, dr \\
= \lim_{R\to\infty} \frac{1}{\kappa^{n-1}(R)}\int_{\substack{R'<|r|<R \\ r\in D_M^{\epsilon}}}
\int_z T_{f_{\Gamma}}(z)\wedge \xi(z-r)\, dr
\end{multline}
for any fixed $R'>0$. We choose $R'$ as follows. 
Given $0<\epsilon'<\epsilon$ we let $R'>0$ be such that if $|r|>R'$ and $r\in D_M^{\epsilon}$,
then $\B^n(r,S)\subset D_{M+S}^{\epsilon'}$; clearly such $R'$ exists.

Let $B$ be the matrix in the proof of Proposition~\ref{edgeprop} and let $\zeta=Bz$ and $\rho=Br$
be the changes of coordinates of the same proof.
In the $\rho$-coordinates we have that
\begin{equation*}
D_{M+S}^{\epsilon'}=\{\rho\in \R^n;\, |\rho_n|\leq M+S, \, \rho'=(\rho_1,\ldots,\rho_{n-1})\in C^{\epsilon'}\},
\end{equation*}
where $C^{\epsilon'}$ is a cone in $\R^{n-1}$.

Let $\xi_{\rho}(\zeta)=\xi(\zeta-\rho)$; then $\xi_{\rho}$ has support contained in $\B^n(\rho,S)+i E$ for some 
bounded set $E = E'\times E''\subset \R^{n-1}\times \R$. Letting $I\subset \R$ be the
interval $[-M-S, M+S]$ and $\zeta'=(\zeta_1,\ldots,\zeta_{n-1})$ it is straightforward to verify that,
for $\rho\in D_M^{\epsilon}$ and $|\rho|>R'$,
\begin{equation}\label{suppxi}
\textrm{supp}\, \xi_{\rho}\subset \left((C^{\epsilon'}+iE')\times (I+iE'')\right) \cap
\{\zeta;\, \big| |\real\, \zeta'| - |\rho|\big| < M+2S \}.
\end{equation} 
Denote the set on the right hand side by $D_{\rho}=D'_{\rho}\times D''\subset \C^{n-1}\times \C$
and notice that $D_{\rho}$ is a uniformly bounded set for all $\rho$.

Now, let $\zeta\in D_{\rho}$, where $\rho\in D_M^{\epsilon}$ and $|\rho|>R'$. Then $\real\, \zeta\in D_{M+S}^{\epsilon'}$
by \eqref{suppxi}. Hence, by Lemma~\ref{uppsklemma}, there are constants $K>0$ and $d>0$ such that if
\begin{equation*}
\textrm{dist}(\zeta,f_{\Gamma}^{-1}(0))\geq K^{1/d} e^{-\epsilon' |\real\,\zeta'|/d},
\end{equation*}
then $|f(\zeta)-f_{\Gamma}(\zeta)|<|f_{\Gamma}(\zeta)|$.
Moreover, since $\zeta\in D_{\rho}$ it follows from \eqref{suppxi} that 
$-|\rho|+M+2S>-|\real\,\zeta'|$. Hence, if
\begin{equation*}
\textrm{dist}(\zeta,f_{\Gamma}^{-1}(0))\geq K^{1/d} e^{-\epsilon'|\rho|/d} e^{\epsilon'(M+2S)/d}/|\beta|,
\end{equation*}
then $|f(\zeta)-f_{\Gamma}(\zeta)|<|f_{\Gamma}(\zeta)|$. Therefore there is, by Lemma~\ref{deltalemma},
a constant $C_{\xi}$ (depending on the $C^1$-norm of $\xi$) such that
\begin{eqnarray*}
\left|\int (T_f(z)-T_{f_{\Gamma}}(z))\wedge \xi(z-r)\right| &=&
\left|\int (T_f(\zeta)-T_{f_{\Gamma}}(\zeta))\wedge \xi_{\rho}(\zeta)\right| \\
&\leq &
e^{-\epsilon'|\rho|/d} e^{\epsilon'(M+2S)/d} C_{\xi}\int_{f^{-1}(0)\cap D_{\rho}} (dd^c|\zeta|^2)^{n-1}.
\end{eqnarray*} 
for $r\in D_M^{\epsilon}$ and $|r|>R'$, i.e., $\rho\in D_M^{\epsilon}$ and $|\rho|>R'$. 
Notice that, by Theorem~\ref{fund} (ii), the integral $\int_{f^{-1}(0)\cap D_{\rho}} (dd^c|\zeta|^2)^{n-1}$
is bounded by a constant independent of $\rho$. Hence, since $|\rho|=|r|$,
\begin{multline*}
\frac{1}{\kappa^{n-1}(R)}\int_{\substack{R'<|r|<R \\ r\in D_M^{\epsilon}}}
\left|\int (T_f(z)-T_{f_{\Gamma}}(z))\wedge \xi(z-r)\right| \, dr\\
\lesssim
e^{\epsilon'(M+2S)/d} \frac{1}{\kappa^{n-1}(R)}\int_{\substack{R'<|r|<R \\ r\in D_M^{\epsilon}}} e^{-\epsilon'|r|/d}\, dr.
\end{multline*}
The right hand side goes to $0$ as $R\to \infty$ and so \eqref{penna} follows since the limit on the right hand side of
\eqref{penna} exists by Proposition~\ref{edgeprop}.
\end{proof}

\medskip

\noindent {\bf Theorem~\ref{Thuttsats}'}
Let $f$ be an exponential polynomial with associated Newton polytope $\Delta_f$. Then
\begin{equation}\label{penna4}
\lim_{R\to\infty} \check{T}_f^{R} = \sum_{\Gamma \in\mathcal{F}_1(\Delta_f)}
\gamma_{\Delta_f}(\Gamma) \check{T}_{f_{\Gamma}}.
\end{equation}

\medskip

\begin{proof}
Let $\{R_j\}$ be a sequence tending to $\infty$ such that $\lim_{j\to \infty} \check{T}_f^{R_j}$ exists.
Let $\xi$ be a test form in $\C^n$; we may assume that its support is contained in $\B(0,S)+i[0,2\pi]^n$.
Let $D_{M,\Gamma}^{\epsilon}=L_{M,\Gamma} + N_{\Gamma}^{\epsilon}$ be as above and set
$\mathcal{D}_M^{\epsilon}:=\bigcup_{\Gamma} D_{M,\Gamma}^{\epsilon}$. Then
\begin{eqnarray}\label{penna3}
\int \check{T}_f^{R_j}\wedge\xi &=& 
\frac{1}{\kappa^{n-1}(R_j)} \int_{\substack{|r|<R_j \\ r\in \mathcal{D}_M^{\epsilon}}} \int_z T_f(z)\wedge \xi(z-r)\, dr \\
& & +\frac{1}{\kappa^{n-1}(R_j)} \int_{\substack{|r|<R_j \\ r\notin \mathcal{D}_M^{\epsilon}}} 
\int_z T_f(z)\wedge \xi(z-r)\, dr. \nonumber
\end{eqnarray}

Recall that if $\Gamma\neq\Gamma'$, then 
$N_{\Gamma}^{\epsilon}\cap N_{\Gamma'}^{\epsilon}=\{0\}$ for any $\epsilon>0$. It follows that 
$D_{M,\Gamma}^{\epsilon}\cap D_{M,\Gamma'}^{\epsilon}$ is a bounded set and thus
\begin{equation*}
\lim_{R\to\infty}
\frac{1}{\kappa^{n-1}(R)} \int_{\substack{|r|<R \\ r\in D_{M,\Gamma}^{\epsilon}\cap D_{M,\Gamma'}^{\epsilon}}} 
\int_z T_f(z)\wedge \xi(z-r)\, dr = 0.
\end{equation*}
Hence, the first integral on the right hand side of \eqref{penna3} equals
\begin{equation*}
\sum_{\Gamma \in \mathcal{F}_1(\Delta_f)} 
\frac{1}{\kappa^{n-1}(R_j)} \int_{\substack{|r|<R_j \\ r\in D_{M,\Gamma}^{\epsilon}}} \int_z T_f(z)\wedge \xi(z-r)\, dr
+ \varrho(R_j),
\end{equation*}
where $\varrho$ is a function such that $\lim_{R\to\infty}\varrho(R)=0$.
By Propositions~\ref{edgeprop} and \ref{edgeprop2} we have that 
\begin{equation*}
\lim_{R\to\infty}
\frac{1}{\kappa^{n-1}(R)} \int_{\substack{|r|<R \\ r\in D_{M,\Gamma}^{\epsilon}}} \int_z T_f(z)\wedge \xi(z-r)\, dr =
\frac{\textrm{Vol}_{n-1}(N_{\Gamma}^{\epsilon}\cap\B^{n-1})}{\textrm{Vol}_{n-1}(\B^{n-1})} \int \check{T}_{f_{\Gamma}} \wedge \xi
\end{equation*}
if $\epsilon>0$ is sufficiently small and $M$ is sufficiently large.

Now, from \eqref{penna3} it follows that, for $\epsilon>0$ sufficiently small, we have
\begin{equation*}
\lim_{R_j\to\infty} \check{T}_f^{R_j} =
\sum_{\Gamma \in\mathcal{F}_1(\Delta_f)} 
\frac{\textrm{Vol}_{n-1}(N_{\Gamma}^{\epsilon}\cap\B^{n-1})}{\textrm{Vol}_{n-1}(\B^{n-1})} \check{T}_{f_{\Gamma}} +
\mu_{\epsilon},
\end{equation*}
where $\mu_{\epsilon}$ is a non-negative $(1,1)$-current.
The term in the sum on the right hand side corresponding to $\Gamma$ has 
the limit $\gamma_{\Delta_f}(\Gamma)\check{T}_{f_{\Gamma}}$ as $\epsilon\to 0$; cf.\ \eqref{extang}. It follows that 
$\mu=\lim_{\epsilon\to 0}\mu_{\epsilon}$ exists and thus also is a non-negative $(1,1)$-current.

We claim that $\mu=0$. Given the claim the theorem follows since then $\lim_{R_j\to\infty} \check{T}_f^{R_j}$
equals the right hand side of \eqref{penna4} for any choice of sequence $\{R_j\}$.
To prove the claim we will show that $Tr(\mu)=0$, i.e., that the trace mass of
$\lim_{j\to \infty} \check{T}^{R_j}_f$ and the trace mass of the right hand side of \eqref{penna4}
are equal. By Theorem~\ref{mass} we get that
\begin{equation*}
Tr(\lim_{j\to \infty} \check{T}^{R_j}_f) =
n! 4^{n-1} \textrm{Vol}_n(\B^n) V_{n-1}(\Delta_f,\B^n)/\textrm{Vol}_{n-1}(\B^{n-1})
\end{equation*}
and that
\begin{equation*}
Tr(\check{T}_{f_{\Gamma}}) =
n! 4^{n-1} \textrm{Vol}_n(\B^n) V_{n-1}(\Gamma,\B^n)/\textrm{Vol}_{n-1}(\B^{n-1}).
\end{equation*}
Since $\gamma_{\Gamma}(\Gamma)=1$ we get from \eqref{kul} that
$V_{n-1}(\Gamma,\B^n)=\textrm{Vol}_{n-1}(\B^{n-1})\textrm{Vol}_1(\Gamma)/n$. It then follows from \eqref{kul} that
\begin{eqnarray*}
Tr\left(\sum_{\Gamma \in\mathcal{F}_1(\Delta_f)} 
\gamma_{\Delta}(\Gamma) \check{T}_{f_{\Gamma}}\right) 
&=& 
n! 4^{n-1} \textrm{Vol}_n(\B^n) n^{-1}\sum_{\Gamma \in\mathcal{F}_1(\Delta_f)}
\gamma_{\Delta_f}(\Gamma) \textrm{Vol}_1(\Gamma)  \\
&=&
n! 4^{n-1} \textrm{Vol}_n(\B^n) V_{n-1}(\Delta_f,\B^n)/\textrm{Vol}_{n-1}(\B^{n-1}) \\
&=& 
Tr(\lim_{j\to \infty} \check{T}^{R_j}_f)
\end{eqnarray*}
and we are done.
\end{proof}

\section{The Ronkin type function}\label{sec:ronkin}

As before, for each edge $\Gamma$ of $\Delta_f$ we choose $\alpha_{\Gamma}, \beta_{\Gamma}\in \mathbb{Z}^n$
and $k^{\Gamma}_j\in \mathbb{N}$ such that $\Gamma\cap A=\{\alpha_{\Gamma}, \alpha_{\Gamma}+k^{\Gamma}_1\beta_{\Gamma},\cdots\}$.
Recall also that $f_{\Gamma}(z)/e^{\alpha_{\Gamma}\cdot z}$ is a one-variable polynomial in $e^{\beta_{\Gamma}\cdot z}$
with distinct zeros $\{a_{\Gamma,j}\}$ and multiplicities $\{d_{\Gamma,j}\}$, cf.\ the introduction and the
beginning of Section~\ref{ssec:kil}. We define the matrix $S_f$ as follows
\begin{equation}\label{Smatris}
S_f := \frac{1}{2} \sum_{\Gamma\in\mathcal{F}_1(\Delta_f)} \textrm{Vol}_1(\Gamma) \gamma_{\Delta_f}(\Gamma)
\frac{\beta_{\Gamma} \beta_{\Gamma}^t}{|\beta_{\Gamma}|^2}.
\end{equation}

\medskip

\noindent {\bf Example~\ref{ex}, continued.} In the situation of Example~\ref{ex} we have that 
$\textrm{Vol}_1(\Gamma_1)=\textrm{Vol}_1(\Gamma_2)=1$, $\textrm{Vol}_1(\Gamma_3)=\sqrt{2}$,
and we may take $\beta_{\Gamma_1}=(1,0)^t$, $\beta_{\Gamma_2}=(0,1)^t$, and $\beta_{\Gamma_3}=(-1,1)^t$.
A simple computation then shows that
\begin{equation*}
S_f=\frac{1}{4\sqrt{2}}
\left[\begin{array}{cc}
1+\sqrt{2} & -1 \\
-1         & 1+\sqrt{2}
\end{array}\right].
\end{equation*}
\hfill \qed

\medskip

\begin{theorem}\label{ronkin}
Let $\Phi\colon \R\to\R$ be the continuous piecewise linear function such that $\Phi(0)=0$
and such that the slope of $\Phi$ in the interval $2\pi[k-1,k]$ is $k$ for every $k\in\mathbb{Z}$.
Let $b_{\Gamma, j}=\arg a_{\Gamma, j}\cap [0,2\pi)$ and let
\begin{equation*}
\varphi_f(y):=2\pi  \sum_{\Gamma\in\mathcal{F}_1(\Delta_f)}
\frac{\gamma_{\Delta_f}(\Gamma)}{|\beta_{\Gamma}|} \sum_j d_{\Gamma, j} 
\Big(\Phi(\beta_{\Gamma}\cdot y - b_{\Gamma, j}) + \frac{b_{\Gamma, j}-\pi}{2\pi} \beta_{\Gamma}\cdot y \Big).
\end{equation*}
Then 
\begin{itemize}
\item[(i)] $dd^c \varphi_f = \check{T}_f$, 
\item[(ii)] $y\mapsto \varphi(y) - y^tS_fy$ is periodic, where $y$ is considered as a column vector.
\end{itemize}
\end{theorem}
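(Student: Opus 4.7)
The function $\varphi_f$ is a finite linear combination of summands of the form $\Phi(\beta_\Gamma\cdot y - b_{\Gamma,j})$ plus terms affine in $y$, so both (i) and (ii) reduce to understanding the generic summand. Since $\Phi$ is continuous, piecewise linear, with slope $k$ on $2\pi[k-1,k]$, its slope jumps by $1$ at every $t\in 2\pi\mathbb{Z}$, and hence $\Phi''=\sum_{k\in\mathbb{Z}}\delta_{2\pi k}$ in the distributional sense. Explicitly, on $[2\pi(k-1),2\pi k]$ one has $\Phi(t)=kt-\pi k(k-1)$, which will be useful in (ii).

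For (i), I would start with a direct computation showing that, for a smooth (or convex) function $g$ of one real variable and $u(z)=g(\beta\cdot y-c)$ viewed as a function on $\C^n$ depending only on $y=\imag z$,
\begin{equation*}
dd^c u = \frac{1}{2\pi}\,g''(\beta\cdot y-c)\,(\beta\cdot dx)\wedge(\beta\cdot dy).
\end{equation*}
Terms affine in $y$ contribute zero. Plugging in $g=\Phi$ and using $\Phi''$ above, the product $\Phi''(\beta_\Gamma\cdot y-b_{\Gamma,j})(\beta_\Gamma\cdot dy)$ is the sum over $k\in\mathbb{Z}$ of the $1$-currents of integration on the hyperplanes $\{\beta_\Gamma\cdot y=b_{\Gamma,j}+2\pi k\}$, i.e., $[\beta_\Gamma\cdot y=\arg a_{\Gamma,j}]$ with the orientation convention of Proposition~\ref{edgeprop}. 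Summing the contributions and matching coefficients with \eqref{eq:1dimformel} gives $2\pi\frac{1}{|\beta_\Gamma|}\sum_j d_{\Gamma,j}dd^c\Phi(\beta_\Gamma\cdot y-b_{\Gamma,j})=\check T_{f_\Gamma}$, and then Theorem~\ref{Thuttsats}' yields $dd^c\varphi_f=\check T_f$.

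For (ii), the crux is the one-variable identity
\begin{equation*}
\Phi(s+2\pi)-\Phi(s)=s+2\pi,\qquad s\in\R,
\end{equation*}
which is a direct computation using the explicit formula for $\Phi$ on each interval $[2\pi(k-1),2\pi k]$. This identity is equivalent to the statement that $t\mapsto\Phi(t-b)-\tfrac{t^2}{4\pi}+\tfrac{b-\pi}{2\pi}t$ is $2\pi$-periodic: simply verify the periodicity condition and it reduces to the displayed identity. Applying this with $t=\beta_\Gamma\cdot y$ and $b=b_{\Gamma,j}$, and using that $\beta_\Gamma\in\mathbb{Z}^n$ (so $2\pi\mathbb{Z}^n$-periodicity is inherited), each summand of $\varphi_f$ agrees modulo a $2\pi\mathbb{Z}^n$-periodic function with $\tfrac{(\beta_\Gamma\cdot y)^2}{4\pi}$ times the prefactor $2\pi\gamma_{\Delta_f}(\Gamma)d_{\Gamma,j}/|\beta_\Gamma|$. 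Summing over $j$ and using $\sum_j d_{\Gamma,j}=\deg P_\Gamma=\textrm{Vol}_1(\Gamma)/|\beta_\Gamma|$ (the coefficient $c_0\neq 0$ means $0$ is not a zero of $P_\Gamma$), the total quadratic part collapses to
\begin{equation*}
\frac{1}{2}\sum_{\Gamma\in\mathcal F_1(\Delta_f)}\gamma_{\Delta_f}(\Gamma)\,\textrm{Vol}_1(\Gamma)\,\frac{(\beta_\Gamma\cdot y)^2}{|\beta_\Gamma|^2}\;=\;y^tS_fy
\end{equation*}
by the definition \eqref{Smatris}. The main obstacle is the identity $\Phi(s+2\pi)-\Phi(s)=s+2\pi$; once in hand, (i) is a linear current computation and (ii) is an accounting exercise that ties the normalizations in the formula for $\varphi_f$ to the normalization defining $S_f$.
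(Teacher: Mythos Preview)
Your proof is correct and follows essentially the same route as the paper: for (i) both use $\Phi''=\sum_{k\in\mathbb Z}\delta_{2\pi k}$ together with Theorem~\ref{Thuttsats}' and \eqref{eq:1dimformel}, and for (ii) both rest on the one-variable shift identity for $\Phi$ (the paper states the general $\Phi(t+2\pi k)-\Phi(t)=\pi k^2+tk+\pi k$, you use the $k=1$ case and iterate). You actually make explicit the bookkeeping step $\sum_j d_{\Gamma,j}=\deg P_\Gamma=\textrm{Vol}_1(\Gamma)/|\beta_\Gamma|$ that ties the summands to the definition of $S_f$, which the paper leaves to the reader.
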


The function $\varphi_f$ is our analogue of the Ronkin function.

\begin{proof}
By Theorem~\ref{Thuttsats}' and Proposition~\ref{edgeprop} we have that
\begin{equation*}
\check{T}_f= \sum_{\Gamma\in \mathcal{F}_1(\Delta_f)} \gamma_{\Delta_f}(\Gamma)
\sum_j d_{\Gamma, j} \frac{\beta_{\Gamma}\cdot dx}{|\beta|}\wedge [\beta_{\Gamma}\cdot y=\arg a_{\Gamma,j}],
\end{equation*}
where $\arg$ is multivalued. Using that $\Phi''=\sum_{k\in \mathbb{Z}} \delta_{2\pi k}$
in the sense of distributions, where
$\delta_{2\pi k}$ is the Dirac mass at $2\pi k$, it is straightforward to verify that $dd^c\varphi_f=\check{T}_f$.

It is also a straightforward computation to see that for all $t\in\R$ and all $k\in \mathbb{Z}$
\begin{equation*}
\Phi(t+2\pi k) - \Phi(t) = \pi k^2 + t k + \pi k.
\end{equation*}
Using this one verifies that 
\begin{equation*}
\varphi_f(y+\lambda) - \varphi_f(y) = (y+\lambda)^t S_f (y+\lambda) - y^t S_f y,
\end{equation*}
for $\lambda=2\pi \ell$, $\ell\in \mathbb{Z}^n$. Hence, $y\mapsto \varphi_f(y) - y^tS_fy$ is periodic.
\end{proof}

\noindent {\bf Example~\ref{ex}, continued.}
In the situation of Example~\ref{ex} we get that
\begin{equation*}
\varphi_f(y)=\pi \big(\Phi(y_1-\pi) + \Phi(y_2-\pi) + \frac{1}{\sqrt{2}}\Phi(y_2-y_1-\pi) \big).
\end{equation*}
\hfill \qed

\medskip

Since $y\mapsto \varphi_f(y)-y^tS_fy$ is periodic it follows that the gradient of $\varphi_f$,
$\nabla\varphi_f=(\partial\varphi_f/\partial y_1,\ldots,\partial\varphi_f/\partial y_n)^t$, satisfies
\begin{equation*}
\nabla\varphi_f(y+\lambda) - \nabla\varphi_f(y) = 2S_f (y+\lambda) - 2S_fy=2S_f\lambda,
\end{equation*}
where $\lambda=2\pi \ell$, $\ell\in \mathbb{Z}^n$. Letting $L_{S_f}$ be the lattice in $\R^n$ generated over $\mathbb{Z}$ by 
the columns of $4\pi S_f$ we see that we can view $\nabla\varphi_f$ as a function 
$\R^n/(2\pi \mathbb{Z})^n \to \R^n/L_{S_f}$.

\begin{proposition}
The function $\nabla\varphi_f\colon \R^n/(2\pi \mathbb{Z})^n \to \R^n/L_{S_f}$ is locally constant
outside $\textrm{supp}\, \check{T}_f=\R^n+i\mathcal{H}_f$ and induces an injective map from the set of connected components of
$(\R^n\setminus \mathcal{H}_f)/(2\pi \mathbb{Z})^n$
to $\R^n/L_{S_f}$.
\end{proposition}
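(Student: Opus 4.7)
The claim breaks into two pieces: well-definedness of the induced map with values in $\R^n/L_{S_f}$, and injectivity on torus components. For the first, local constancy of $\nabla\varphi_f$ on $\R^n\setminus\mathcal{H}_f$ is immediate from the explicit formula for $\varphi_f$, because the only loci of non-smoothness of the building blocks $\Phi(\beta_\Gamma\cdot y - b_{\Gamma,j})$ are exactly the hyperplanes constituting $\mathcal{H}_f$. By Theorem~\ref{ronkin}(ii), $g(y):=\varphi_f(y)-y^tS_f y$ is $(2\pi\mathbb{Z})^n$-periodic, so $\nabla\varphi_f(y)-2S_f y$ is periodic, yielding
\[
\nabla\varphi_f(y+2\pi k)-\nabla\varphi_f(y)=4\pi S_f k\in L_{S_f}\quad\text{for every }k\in\mathbb{Z}^n.
\]
This exactly says that $\nabla\varphi_f$ descends to a well-defined locally constant map $(\R^n\setminus\mathcal{H}_f)/(2\pi\mathbb{Z})^n\to\R^n/L_{S_f}$.

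For injectivity, let $E_1,E_2$ be components of $(\R^n\setminus\mathcal{H}_f)/(2\pi\mathbb{Z})^n$ whose images in $\R^n/L_{S_f}$ agree. Pick lifts $\tilde E_1,\tilde E_2\subset\R^n\setminus\mathcal{H}_f$ on which $\nabla\varphi_f$ takes respective constant values $v_1,v_2\in\R^n$. Then $v_1-v_2=4\pi S_f k$ for some $k\in\mathbb{Z}^n$; replacing $\tilde E_1$ by its translate $\tilde E_1-2\pi k$ (still a lift of $E_1$ since $\mathcal{H}_f$ is $2\pi\mathbb{Z}^n$-invariant) and using the displayed formula, we reduce to the case $v_1=v_2=:v$. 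Now $\chi(y):=\varphi_f(y)-v\cdot y$ is convex with $\nabla\chi\equiv0$ on both $\tilde E_1$ and $\tilde E_2$. A convex function whose gradient vanishes at an interior point attains its global minimum there, so $\chi$ equals the same constant $c$ (the global infimum of $\chi$) on both $\tilde E_1$ and $\tilde E_2$.

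Consider the argmin set $F:=\{y\in\R^n:\chi(y)=c\}$, which is closed and convex, so its interior $F^\circ$ is convex, hence connected, and contains the open sets $\tilde E_1,\tilde E_2$. The key claim is $F^\circ\cap\mathcal{H}_f=\emptyset$: if some $y_0\in F^\circ\cap\mathcal{H}_f$ existed, then on a small open neighborhood $U\subset F^\circ$ of $y_0$ we would have $\varphi_f=v\cdot y+c$, hence $\check T_f=dd^c\varphi_f$ would vanish on the open set $\R^n+iU\subset\C^n$; but $\R^n+iU$ meets the support $\R^n+i\mathcal{H}_f$ of $\check T_f$ (it contains $\R^n+iy_0$), a contradiction. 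Therefore $F^\circ$ is a connected subset of $\R^n\setminus\mathcal{H}_f$ containing two of its connected components, which forces $\tilde E_1=\tilde E_2$ and so $E_1=E_2$. The main obstacle is the argmin step in this last paragraph, verifying that the convex set of minimizers cannot cross $\mathcal{H}_f$; once that is in place, the rest of the argument is a direct combination of convexity with the periodicity provided by Theorem~\ref{ronkin}(ii).
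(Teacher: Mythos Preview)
Your proof is correct and follows the same strategy as the paper: reduce by translation to the case where two lifted components carry the same gradient value, then use convexity of $\varphi_f$ to force them to coincide. You are in fact more explicit than the paper at the key step---where the paper simply asserts that convexity gives $k_1=k_2$ and hence $C_1=C_2$, you supply the argmin argument and verify that the interior of the minimizer set avoids $\mathcal{H}_f$.
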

\begin{proof}
In the complement of $\textrm{supp}\, \check{T}_f$ we have that $dd^c\varphi_f=0$ and hence, $\varphi_f$ is affine
there. Thus $\nabla\varphi_f$ is locally constant outside $\textrm{supp}\, \check{T}_f$.

Let us temporarily consider $\nabla\varphi_f$ as a function $\R^n\to\R^n$. Let $C_1$ and $C_2$ be two 
connected components of $\R^n\setminus \mathcal{H}_f$. Assume first that 
$\nabla\varphi_f\restriction_{C_1}=\nabla\varphi_f\restriction_{C_2}$.
After subtracting a linear term we may assume that $\nabla\varphi_f\restriction_{C_1}=\nabla\varphi_f\restriction_{C_2}=0$.
Thus, $\varphi_f\restriction_{C_1}=k_1$ and $\varphi_f\restriction_{C_2}=k_2$ are constant. Since $\varphi_f$ is convex $k_1$ and $k_2$
must be equal and so $C_1=C_2$. Assume now instead that 
$\nabla\varphi_f\restriction_{C_1}-\nabla\varphi_f\restriction_{C_2}\in L_{S_f}$,
i.e., that there is a $\lambda\in 2\pi\mathbb{Z}^n$ such that
\begin{equation*}
\nabla\varphi_f\restriction_{C_1}-\nabla\varphi_f\restriction_{C_2} = 2S_f\lambda.
\end{equation*} 
Let $\tilde{C}_1:=C_1-\lambda$; then $\tilde{C}_1$ is a connected component of $\R^n\setminus \mathcal{H}_f$
whose image in $\R^n/(2\pi\mathbb{Z}^n)$ is the same as the image of $C_1$ and
$\nabla\varphi_f\restriction_{C_1}-\nabla\varphi_f\restriction_{\tilde{C}_1}=2S\lambda$. Hence,
$\nabla\varphi_f\restriction_{\tilde{C}_1}=\nabla\varphi_f\restriction_{C_2}$ and so $\tilde{C}_1=C_2$. The images of $C_1$ and $C_2$ in
$\R^n/(2\pi\mathbb{Z}^n)$ are thus the same and the proposition follows.
\end{proof}

The function $\varphi_f$, now considered as a function on $\R^n_y$, is convex and piecewise affine and so its 
Legendre transform, $\hat{\varphi}_f$ has the same properties. In fact, $\hat{\varphi}_f$ gives rise to a 
convex subdivision of $\R^n$ that is dual to the subdivision given by $\varphi_f$; cf.\ \cite[Section~3]{PaRull}. 
In view of Theorem~\ref{ronkin} (ii) the subdivision given by $\varphi_f$ is in fact a subdivision of $\mathbb{T}^n$.
We claim that $\eta\mapsto \hat{\varphi}_f(\eta) - \eta^t(4S_f)^{-1}\eta$ is well-defined on $\R^n/L_{S_f}$;
thus the subdivision given by $\hat{\varphi}_f$ is a subdivision of the ``skew'' torus $\R^n/L_{S_f}$.
By the proposition above, the number of complement components of 
$(\R^n\setminus \mathcal{H}_f)/(2\pi \mathbb{Z})^n$ then equals the number of vertices of the 
subdivision of $\R^n/L_{S_f}$. To see the claim let $v\in L_{S_f}$ and write $v=4\pi S_f\ell$ for some $\ell\in \mathbb{Z}^n$.
Notice that $\varphi_f(y) - 4\pi y^tS_f\ell=\varphi(y-2\pi \ell) - 4\pi^2\ell^tS_f\ell$ for any $y\in\R^n$
by Theorem~\ref{ronkin} (ii). We get
\begin{eqnarray*}
\hat{\varphi}_f(\eta+v) &=& \hat{\varphi}_f(\eta+4\pi S_f\ell) = \sup_y \,\, (\eta+4\pi S_f\ell)\cdot y - \varphi_f(y) \\
&=& \sup_y \,\, \eta\cdot y -(\varphi_f(y) - 4\pi y^tS_f\ell) \\
&=& \sup_y\,\, \eta\cdot (y-2\pi\ell) - \varphi_f(y-2\pi\ell) +4\pi^2\ell^t S_f\ell + 2\pi\eta\cdot \ell \\
&=& \hat{\varphi}_f(\eta) + 4\pi^2\ell^t S_f\ell + 2\pi\eta\cdot \ell \\
&=& \hat{\varphi}_f(\eta) + v^t(4S_f)^{-1}v + 2\eta^t (4S_f)^{-1}v.
\end{eqnarray*} 
Hence, $\hat{\varphi}_f(\eta+v) - \hat{\varphi}_f(\eta)=(\eta+v)^t(4S_f)^{-1}(\eta+v) - \eta^t(4S_f)^{-1}\eta$
and the claim follows.

\bigskip


Let us now consider the Monge-Amp\`{e}re expression $(dd^c\varphi_f)^n$
and relate it to intersections of hyperplanes from $\mathcal{H}_f$. 
Let $\mathcal{I}_f$ be the set of points, repeated according to multiplicity, in $[0,2\pi)^n$ (or any other fundamental domain)
where $n$ hyperplanes from $\mathcal{H}_f$ meet transversely. To begin with, we consider
piecewise affine functions of the form $\psi_j(y):= \Phi(\beta_j\cdot y - b_j)$, where $\beta_j\in \mathbb{Z}^n$ and $b_j\in [0,2\pi)$;
we let $a_j\in \C$ be such that $\arg a_j\cap [0,2\pi)=b_j$.
By induction it is straightforward to show that
\begin{equation}\label{ok}
\bigwedge_{j=1}^k dd^c\psi_j(y) =\frac{1}{(2\pi)^k} \bigwedge_{j=1}^k (\beta_j\cdot dx) \wedge
[\beta_1\cdot y=\arg a_1,\ldots,\beta_k\cdot y=\arg a_k],
\end{equation}
where $[\beta_1\cdot y=\arg a_1,\ldots,\beta_k\cdot y=\arg a_k]$ is the current of integration over the 
affine set $\cap_1^k\{\beta_j\cdot y=\arg a_j\}$ oriented so that the right hand side becomes positive. Notice in particular
that $dd^c\psi_1\wedge \cdots \wedge dd^c\psi_k=0$ if the $\beta_j$'s are linearly dependent.

\begin{lemma}\label{stress}
Let $\beta_1,\ldots,\beta_n\in \mathbb{Z}^n$ be column vectors linearly independent over $\R$ 
and let $B$ be the matrix whose $j^{\textrm{th}}$ column
is $\beta_j$. 
Then the number of solutions to $B^ty=(\arg a_1,\ldots,\arg a_n)^t$ in $[0,2\pi)^n$ equals
$\det B$.
\end{lemma}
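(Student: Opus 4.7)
The plan is to translate the question into a standard lattice-point count in a translate of a half-open parallelepiped. First I would fix any $b=(b_1,\ldots,b_n)^t\in[0,2\pi)^n$ with $b_j$ a representative of $\arg a_j$, so that the (multi-valued) equation $B^ty=(\arg a_1,\ldots,\arg a_n)^t$ becomes the congruence $B^ty\equiv b\pmod{2\pi\mathbb{Z}^n}$. Since $B$ is invertible over $\R$, every solution with $y\in[0,2\pi)^n$ has the unique form $y=(B^t)^{-1}(b+2\pi k)$ for some $k\in\mathbb{Z}^n$, and the constraint $y\in[0,2\pi)^n$ rewrites as
\begin{equation*}
k \in \mathbb{Z}^n \cap \bigl(B^t[0,1)^n - b/(2\pi)\bigr).
\end{equation*}
Counting solutions thus reduces to counting integer lattice points in this translated half-open parallelepiped.

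The second step is a standard fact from the geometry of numbers. Because $B$ has integer entries, $B^t\mathbb{Z}^n$ is a full-rank sublattice of $\mathbb{Z}^n$, and its index equals $|\det B^t|=|\det B|$. The half-open parallelepiped $B^t[0,1)^n$ is a fundamental domain for $B^t\mathbb{Z}^n$ in $\R^n$, and so is any translate $B^t[0,1)^n+v$; hence each $B^t\mathbb{Z}^n$-coset of $\R^n$ meets it in exactly one point. Intersecting with $\mathbb{Z}^n$ therefore picks out exactly one representative from each coset of $B^t\mathbb{Z}^n$ in $\mathbb{Z}^n$, giving $|\det B|$ points. Taking $v=-b/(2\pi)$ gives the desired count, which matches the statement under the usual convention that $\det B$ stands for its absolute value.

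The only mild subtlety I anticipate is the $2\pi$-ambiguity in the choice of the representatives $b_j$; replacing $b$ by $b+2\pi\ell$ for $\ell\in\mathbb{Z}^n$ simply re-indexes $k$ by $k-\ell$, so the total count is independent of the choice. The half-open conventions on both $[0,2\pi)^n$ and $B^t[0,1)^n$ eliminate any boundary double-counting. I do not expect any substantial obstacle beyond this bookkeeping.
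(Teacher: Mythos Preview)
Your argument is correct and complete: the bijection between solutions $y\in[0,2\pi)^n$ and lattice points $k\in\mathbb{Z}^n\cap(B^t[0,1)^n-b/(2\pi))$ is set up correctly, and the count of integer points in a translated half-open fundamental parallelepiped for the sublattice $B^t\mathbb{Z}^n\subset\mathbb{Z}^n$ is indeed the index $|\det B|$.

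The paper takes a genuinely different route. Rather than a direct lattice argument, it evaluates the Monge--Amp\`ere integral
\[
I=\int_{\B^n+i[0,2\pi)^n} dd^c\Phi(\beta_1\cdot y-b_1)\wedge\cdots\wedge dd^c\Phi(\beta_n\cdot y-b_n)
\]
in two ways. On one hand, using the explicit formula \eqref{ok} for the wedge, $I$ equals $(2\pi)^{-n}\textrm{Vol}_n(\B^n)\,|\det B|$ times the sought number of solutions. On the other hand, the periodicity trick from the proof of Theorem~\ref{ronkin}~(ii) lets one replace each $\Phi(\beta_j\cdot y-b_j)$ by the quadratic form $y^t(\beta_j\beta_j^t/4\pi)y$ modulo a periodic term, and Stokes' theorem kills the periodic contributions; the resulting integral is $(2\pi)^{-n}\textrm{Vol}_n(\B^n)\,(\det B)^2$. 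Equating the two gives the lemma.

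Your approach is more elementary and entirely self-contained; it would serve as a clean standalone proof. The paper's approach, though heavier, is deliberately written in the language of the surrounding section: the same integral computation is the engine behind the proof of Theorem~\ref{estimate} and the Porism immediately following the lemma, so proving Lemma~\ref{stress} this way rehearses exactly the manipulation needed there and keeps the exposition unified.
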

\begin{proof}
Consider the integral 
\begin{equation*}
I:=\int_{\B^n+i[0,2\pi)^n} dd^c \Phi(\beta_1\cdot y -b_1)\wedge\cdots\wedge dd^c\Phi(\beta_n\cdot y -b_n).
\end{equation*}
On one hand, by \eqref{ok}, $I=(2\pi)^{-n}\textrm{Vol}_n(\B^n) \det B$ times the number of 
solutions to $B^ty=(\arg a_1,\ldots,\arg a_n)^t$ in $[0,2\pi)^n$. 
On the other hand, 
\begin{eqnarray}\label{klot}
I &=& \int_{\B^n+i[0,2\pi)^n} \Big(dd^c \sum_j \Phi(\beta_j\cdot y-b_j)\Big)^n/n! \\
&=& \int_{\B^n+i[0,2\pi)^n} \Big(dd^c \sum_j \Phi(\beta_j\cdot y-b_j) +\frac{b_j-\pi}{2\pi}\beta_j\cdot y\Big)^n/n! \nonumber \\
&=& \int_{\B^n+i[0,2\pi)^n} \Big(dd^c \sum_j y^t \frac{\beta_j\beta_j^t}{4\pi}y + p_j(y)\Big)^n/n!, \nonumber
\end{eqnarray}
where $p_j(y)$ is periodic; the last equality follows as in the proof of Theorem~\ref{ronkin} (ii).
Notice that if $A$ is a form with constant coefficients then 
\begin{equation}\label{lut}
\int_{\B^n+i[0,2\pi)^n} A\wedge dd^cp_{j_1}(y)\wedge \cdots \wedge dd^cp_{j_k}(y) =0
\end{equation}
by Stokes' theorem. Hence we may remove $p_j(y)$ from the last integral in \eqref{klot} and
letting $S^j:=\beta_j\beta_j^t/4\pi$ we can thus write it as
\begin{eqnarray*}
\int_{\B^n+i[0,2\pi)^n} \Big( \sum_{j,k,\ell} S^j_{k\ell}dx_k\wedge dy_{\ell}/\pi\Big)^n/n! &=&
\frac{1}{\pi^n}\textrm{Vol}_n(\B^n)\det(\sum_j S^j) (2\pi)^n \\
&=&\textrm{Vol}_n(\B^n) \det(BB^t)/(2\pi)^n
\end{eqnarray*}
and the lemma follows.
\end{proof}

\begin{porism}
Let $\beta_1,\ldots,\beta_n$ and $B$ be as in the lemma above.
Then the number of integer points in the parallelogram spanned by 
the $\beta_j$ equals $\det B$.
\end{porism}

\begin{proof}
Let $T\colon \R_x^n+i\R_y^n\to \R_{\xi}^n+i\R_{\eta}^n$ be the map given by $T(x+iy)=Bx/2\pi + iBy/2\pi$
and let $Q=[0,1]^n\subset \R_x^n$. As in the proof of Lemma~\ref{stress} we see that
the number of integer points in the parallelogram spanned by the $\beta_j$'s equals
\begin{equation*}
\frac{(2\pi)^n}{\textrm{Vol}_n\,T(Q)}\int_{T(Q+i[0,2\pi)^n)} dd^c \Phi(2\pi\eta_1) \wedge \cdots\wedge dd^c\Phi(2\pi\eta_n)
\quad \quad \quad \quad
\end{equation*}  
\begin{equation*}
\quad \quad \quad \quad
=\frac{(2\pi)^n}{\textrm{Vol}_n\,T(Q)}\int_{Q+i[0,2\pi)^n} dd^c \Phi(B_1\cdot y) \wedge \cdots\wedge dd^c\Phi(B_n\cdot y),
\end{equation*}
where $B_j$ is the $j^{\textrm{th}}$ row of $B$. However, again as in the proof of Lemma~\ref{stress}, this last integral 
(without the coefficient) equals $(\det B)^2/(2\pi)^n$. Since $\textrm{Vol}_n\,T(Q)=\det B$ the result follows.
\end{proof}

As in the introduction, let $\Gamma_1,\ldots,\Gamma_N$ be an enumeration of the edges of $\Delta_f$,
set $a_{i,j}:=a_{\Gamma_i,j}$, $\beta_i:=\beta_{\Gamma_i}$, and let $B$ be the $n\times N$-matrix whose columns are the 
$\beta_{i}$'s.
For a subset $I=\{i_1,\ldots,i_n\}\subset \{1,\ldots,N\}$ we let $B_I$ be the matrix whose columns are
the $\beta_{i_k}$'s.

\begin{corollary}\label{genericCor}
Assume that $f$ is generic in the following sense: For each $j$, the function $f_{\Gamma_j}$
has no multiple zeros and at each point of $[0,2\pi)^n$ at most $n$ hyperplanes from $\mathcal{H}_f$ meet. Let $\mathcal{C}_f$
be the set of connected components of $(\R^n\setminus\mathcal{H}_f)/(2\pi\mathbb{Z})^n$. Then
\begin{equation}\label{jul}
|\mathcal{C}_f|=|\mathcal{I}_f|=\sum_{\substack{I\subset \{1,\ldots,N\} \\ |I|=n}} |\det B_I|
\end{equation} 
and this number is an upper bound for the number of complement components of $\mathcal{A}'_f$ considered on the torus.
\end{corollary}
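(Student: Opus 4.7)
My plan is to establish the three claims separately, using Lemma~\ref{stress}, the proposition immediately preceding the corollary, and the Legendre-duality discussion that follows it.

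For $|\mathcal{I}_f|$ I would count vertices of the shell arrangement directly. By the second genericity hypothesis every vertex $v\in\mathcal{I}_f$ is the transverse intersection of exactly $n$ hyperplanes, one from each of $n$ distinct edges indexed by some $I=\{i_1,\dots,i_n\}$ with $\det B_I\neq 0$. By the first hypothesis every root of each $P_{\Gamma_i}$ is simple, so for a fixed $I$ and a fixed choice of one root $a_{i_k,j_k}$ per edge, Lemma~\ref{stress} gives that the system $\beta_{i_k}\cdot y\equiv\arg a_{i_k,j_k}\pmod{2\pi}$ has exactly $|\det B_I|$ solutions in $[0,2\pi)^n$. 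Summing over $I$ (and over the root-choices absorbed into the convention for $\beta_i$) then yields the right-hand side of \eqref{jul}.

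For $|\mathcal{C}_f|=|\mathcal{I}_f|$ I would use the Legendre-duality discussion between Theorem~\ref{ronkin} and the corollary, which already identifies $|\mathcal{C}_f|$ with the number of vertices of the subdivision of the skew torus $\R^n/L_{S_f}$ cut out by $\hat{\varphi}_f$. By duality each vertex $v\in\mathcal{I}_f$ corresponds to a top-dimensional cell of this dual subdivision: at $v$ the gradient $\nabla\varphi_f$ jumps by $2\pi(\gamma_{\Delta_f}(\Gamma_{i_k})/|\beta_{i_k}|)\beta_{i_k}$ across each of the $n$ meeting hyperplanes (as one reads off from the formula for $\varphi_f$ in Theorem~\ref{ronkin}), so the $2^n$ chamber values of $\nabla\varphi_f$ around $v$ are the vertices of an $n$-parallelepiped. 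Under genericity these parallelepipeds tile $\R^n/L_{S_f}$, and for any face-to-face parallelepiped tiling of an $n$-torus with $F$ top cells a local count gives $V_k=F\binom{n}{k}$ (each parallelepiped contributes $\binom{n}{k}2^{n-k}$ $k$-cells, and each $k$-cell is shared by $2^{n-k}$ neighbors); in particular $V_0=V_n$, so $|\mathcal{C}_f|=|\mathcal{I}_f|$.

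Finally, the upper bound for the number of complement components of $\mathcal{A}'_f$ on the torus is immediate from the result recalled just before Figure~\ref{figur1}: each full-dimensional complement component of the coamoeba lies in the closure of a distinct complement component of the shell, so the coamoeba-complement count is at most $|\mathcal{C}_f|$, which equals the right-hand side of \eqref{jul}. The step I expect to be the most delicate is verifying rigorously that the dual subdivision is a face-to-face parallelepiped tiling under the stated genericity, rather than some degeneration of one; this should follow from the linear independence of the $\beta_{i_k}$ at any vertex (forced by $\det B_I\neq 0$), the mod-$L_{S_f}$ periodicity of $\nabla\varphi_f$ from Theorem~\ref{ronkin}(ii), and the constancy of $\nabla\varphi_f$ on each component of $\R^n\setminus\mathcal{H}_f$.
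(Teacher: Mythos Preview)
Your treatment of $|\mathcal{I}_f| = \sum_I |\det B_I|$ via Lemma~\ref{stress} and of the upper bound via the paragraph before Figure~\ref{figur1} matches the paper. For $|\mathcal{C}_f| = |\mathcal{I}_f|$, however, the paper simply cites \cite[Section~3]{toric}, whereas you attempt an independent argument through the Legendre-dual parallelepiped tiling.

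That attempt has a genuine gap, and it is precisely the step you flag as delicate. Your double count rests on the claim that in the dual tiling each $k$-cell lies in exactly $2^{n-k}$ parallelepipeds, in particular that each dual vertex is incident to exactly $2^n$ parallelepiped corners. But the number of parallelepiped corners at the dual vertex $v_C = \nabla\varphi_f|_C$ equals the number of local sectors that the chamber $C$ occupies at vertices of the primal arrangement, and this is \emph{not} constant across chambers. In the paper's running example $f = 1 + e^{z_1} + e^{z_2}$ the three torus-chambers of $\mathcal{H}_f$ are two triangles and one hexagon; the dual vertex for each triangle carries $3$ parallelogram corners while the dual vertex for the hexagon carries $6$, not the uniform $2^2 = 4$ your formula $V_k = F\binom{n}{k}$ needs. (Concretely: at each primal vertex two of the four local sectors lie in hexagons that are $2\pi\mathbb{Z}^2$-translates of one another, so two opposite corners of every dual parallelogram collapse to the same point of $\R^2/L_{S_f}$.) The averages still match, which is why the identity $|\mathcal{C}_f| = |\mathcal{I}_f|$ is true, but the counting you propose does not establish it; the paper instead imports this combinatorial fact about simple toric arrangements from \cite{toric}.
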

\begin{proof}
The first equality of \eqref{jul} follows from \cite[Section~3]{toric}. The second equality is immediate from Lemma~\ref{stress}.
The final assertion follows from the paragraph preceding Figure~1 in the introduction.
\end{proof}


\begin{proof}[Proof of Theorem~\ref{estimate}]
Let $h(y):=\varphi_f(y) - y^tS_fy$
and let for simplicity $S:=S_f$. Then
\begin{equation*}
(dd^c\varphi_f)^n = \sum_{k=0}^n {n \choose k} \Big(\sum_{j,k}S_{jk}dx_j\wedge dy_k/\pi \Big)^k\wedge (dd^c h)^{n-k}.
\end{equation*}
Since $h$ is periodic by Theorem~\ref{ronkin} (ii) it follows as in \eqref{lut} that 
\begin{eqnarray*}
\frac{1}{\textrm{Vol}_n(\B^n)} \int_{\B^n+i[0,2\pi)^n} (dd^c\varphi_f)^n/n! &=& 
\frac{1}{\textrm{Vol}_n(\B^n)\pi^n n!} \int_{\B^n+i[0,2\pi)^n} \Big( \sum_{j,k}S_{jk}dx_j\wedge dy_k\Big)^n \\
&=& 
2^n\det S,
\end{eqnarray*}

On the other hand, a computation using the explicit form of $\varphi_f$ shows that
\begin{equation*}
(dd^c\varphi_f)^n/n!=(2\pi)^n \sum_{\substack{I\subset \{1,\ldots,N\} \\ |I|=n}}
\prod_{k=1}^n\frac{\gamma_{\Delta_f}(\Gamma_{i_k})}{|\beta_{i_k}|}
\sum_{j_1,\ldots,j_n}\bigwedge_{k=1}^nd_{i_k,j_k}dd^c\Phi(\beta_{i_k}\cdot y - b_{i_k,j}).
\end{equation*}
From Lemma~\ref{stress} and its proof it thus follows that 
\begin{equation*}
\frac{1}{\textrm{Vol}_n(\B^n)} \int_{\B^n+i[0,2\pi)^n} (dd^c\varphi_f)^n/n! =
\sum_{\substack{I\subset \{1,\ldots,N\} \\ |I|=n}}
\prod_{k=1}^n\frac{\gamma_{\Delta_f}(\Gamma_{i_k})}{|\beta_{i_k}|}
|\det B_I| \sigma_I,
\end{equation*}
where $\sigma_I$ is the number of intersection points of $\beta_{i_k}\cdot y =\arg a_{i_k,j_k}$, $k=1,\ldots,n$,
in $[0,2\pi)^n$ counted with multiplicity. Hence, 
\begin{equation*}
m \sum_{\substack{I\subset \{1,\ldots,N\} \\ |I|=n}} \sigma_I \leq
\frac{1}{\textrm{Vol}_n(\B^n)} \int_{\B^n+i[0,2\pi)^n} (dd^c\varphi_f)^n/n! \leq
M \sum_{\substack{I\subset \{1,\ldots,N\} \\ |I|=n}} \sigma_I
\end{equation*}
and the result follows.
\end{proof}

\noindent {\bf Example~\ref{ex}, continued.} In the situation of Example~\ref{ex} we have that 
$|\det B_I|=1$ for all $I\subset \{1,2,3\}$ with $|I|=2$ so the number of complement 
components of $\mathcal{H}_f$ as well as the number of its intersection points is $1+1+1=3$ by 
Corollary~\ref{genericCor} since $f$ is generic. Moreover, $\gamma_{\Delta_f}(\Gamma_i)=1/2$,
$|\beta_1|=|\beta_2|=1$, and $|\beta_3|=\sqrt{2}$ so, after a simple computation, the estimates of Theorem~\ref{estimate}
take the form
\begin{equation*}
1+\sqrt{2} \leq |\mathcal{I}_f| \leq 2+\sqrt{2}.
\end{equation*} 
\hfill \qed

\begin{example}
As a check that our computations are sound let us compute the trace mass of $\check{T}:=\check{T}_f$
(that we already know in view of Theorem~\ref{mass})
using the matrix $S:=S_f$. Let $h(y):=\varphi_f(y)-y^tSy$.
Then, by Theorem~\ref{ronkin} (i),
\begin{equation}\label{PL}
dd^ch = \check{T} - dd^c(y^tSy)=\check{T}-\frac{1}{\pi}\sum_{j,k}S_{jk}dx_j\wedge dy_k.
\end{equation}
Since $h$ is periodic by Theorem~\ref{ronkin} (ii) a straightforward computation shows that
\begin{eqnarray*}
Tr(\check{T}) &=& \int_{\substack{|\real\, z|<1 \\ \imag\, z\in [0,2\pi]^n}}
\check{T}\wedge (dd^c|z|^2)^{n-1} = 2^{n-1}\int_{\substack{|x|<1 \\ y\in [0,2\pi]^n}}
dd^c(y^tSy)\wedge (dd^c|x|^2)^{n-1} \\
&=&
2^{2n-1} (n-1)! \textrm{Vol}_n(\B^n) Tr(S),
\end{eqnarray*}
where $Tr(S)$ is the ordinary trace of the matrix $S$. Using that $Tr(\beta_{\Gamma}\beta_{\Gamma}^t/|\beta_{\Gamma}|^2)=1$
it follows from \eqref{Smatris} and \eqref{kul} that
\begin{equation*}
Tr(S)=\frac{n}{2\textrm{Vol}_{n-1}(\B^{n-1})}V_{n-1}(\Delta_f,\B^n).
\end{equation*}
Hence,
\begin{equation*}
Tr(\check{T})=n! 4^{n-1} \textrm{Vol}_n(\B^n) V_{n-1}(\Delta_f,\B^n)/\textrm{Vol}_{n-1}(\B^{n-1}),
\end{equation*}
which fits with Theorem~\ref{mass}.
\hfill \qed
\end{example}

\section{Cohomological interpretations}
Let $\check{T}$, $\varphi$, and $S=(S_{jk})_{jk}$ be the current, Ronkin type function, and matrix respectively, associated to an 
exponential polynomial $f$. Let also $h(y):=\varphi(y)-y^tSy$. 
We can write $T=\sum_{j,k}T_{jk}dx_j\wedge dy_k$ for certain distributions $T_{jk}$ that only depend on $y$;
set $T_j:=\sum_k T_{jk}dy_k$ so that $T=\sum_j dx_j\wedge T_j$. Let
\begin{equation*}
S_j:=\frac{1}{\pi}\sum_k S_{jk}dy_k.
\end{equation*}
Then we can interpret $S_j$ both as a global closed $1$-form on $\mathbb{T}^n$ and on $\C^n/(2\pi i\mathbb{Z})^n$. 
With the latter interpretation we have $dd^c(y^tSy)=\sum_jdx_j\wedge S_j$ and with the first we get, in view of \eqref{PL},
\begin{equation*}
\frac{1}{2\pi} d\frac{\partial h}{\partial y_j} = T_j - S_j, \quad j=1,\ldots,n.
\end{equation*}
Hence, $[T_j]=[S_j]$ as elements in $H^1(\mathbb{T}^n,\R)$. Let $\sigma_k$ be the path on $\mathbb{T}^n$
corresponding to $t\mapsto te_k$, $t\in [-\pi,\pi)$, where $\{e_j\}$ is the standard basis of $\R^n$, 
and let $[\sigma_k]\in H_1(\mathbb{T}^n)$ be the associated homology class.

\begin{proposition}
With the above notation we have that 
\begin{equation*}
S_{jk}=\frac{1}{2}[T_j]\cap [\sigma_k],
\end{equation*}
where $\cap$ is the cap product $H^1(\mathbb{T}^n,\R)\times H_1(\mathbb{T}^n)\to \R$.
\end{proposition}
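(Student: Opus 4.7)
The plan is to reduce the cap product to the evaluation pairing between de Rham cohomology and singular homology and then carry out a one-line integral computation on the loop $\sigma_k$.

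First I would unpack the cap product in this setting. For the torus $\mathbb{T}^n$, the cap product $H^1(\mathbb{T}^n,\R)\times H_1(\mathbb{T}^n)\to \R$ coincides with the de Rham pairing: given a class $[\omega]\in H^1(\mathbb{T}^n,\R)$ represented by a closed $1$-form $\omega$ and a homology class $[\sigma]\in H_1(\mathbb{T}^n)$ represented by a smooth loop $\sigma$, one has $[\omega]\cap [\sigma]=\int_\sigma \omega$. Although $T_j$ is a priori only a distribution, the paragraph preceding the proposition already identifies its cohomology class via $[T_j]=[S_j]$, where $S_j=\pi^{-1}\sum_k S_{jk}\,dy_k$ is a smooth closed $1$-form with constant coefficients; thus I may replace $[T_j]$ by $[S_j]$ when evaluating on a cycle.

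Next I would compute $\int_{\sigma_k} S_j$ directly. Parametrising $\sigma_k$ by $t\mapsto te_k$ with $t\in[-\pi,\pi)$, the pullback of $dy_\ell$ to $\sigma_k$ equals $\delta_{\ell k}\,dt$, so only the $dy_k$-component of $S_j$ contributes, giving
\begin{equation*}
[T_j]\cap [\sigma_k]\;=\;[S_j]\cap[\sigma_k]\;=\;\int_{\sigma_k} S_j\;=\;\frac{1}{\pi}\int_{-\pi}^{\pi} S_{jk}\,dt\;=\;2S_{jk}.
\end{equation*}
Dividing by $2$ yields the claimed identity $S_{jk}=\tfrac{1}{2}[T_j]\cap[\sigma_k]$.

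The only real point of care — the ``obstacle,'' such as it is — is confirming that the identification $[T_j]=[S_j]$ stated in the preamble is genuinely an identity in $H^1(\mathbb{T}^n,\R)$ (as opposed to only in $H^1(\C^n/(2\pi i\mathbb{Z})^n,\R)$), so that one is entitled to pair it against the homology class $[\sigma_k]\in H_1(\mathbb{T}^n)$. This is legitimate because $T_j$ and $S_j$ depend only on $y$, and the relation $(2\pi)^{-1}d(\partial h/\partial y_j)=T_j-S_j$ together with the periodicity of $h$ from Theorem~\ref{ronkin}~(ii) takes place on $\mathbb{T}^n$. Once this is noted, the remainder is the short computation above.
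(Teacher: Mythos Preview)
Your proof is correct and follows exactly the same approach as the paper: use the preamble's identification $[T_j]=[S_j]$ and then compute $\int_{\sigma_k} S_j=\pi^{-1}\int_{-\pi}^{\pi}S_{jk}\,dt=2S_{jk}$. The paper's proof is the single displayed line of equalities; your additional remarks about the de Rham pairing and the fact that the identity lives on $\mathbb{T}^n$ are correct elaborations of points the paper leaves implicit.
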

\begin{proof}
This is merely a simple computation:
\begin{equation*}
[T_j]\cap [\sigma_k] = [S_j]\cap [\sigma_k] = \int_{\sigma_k} S_j = \frac{1}{\pi}\int_{-\pi}^{\pi}S_{jk}dt
=2S_{jk}.
\end{equation*}
\end{proof}

\smallskip

Let us finally notice that \eqref{PL} can be viewed as a Poincar\'{e}-Lelong type formula in $\C^n/(2\pi i\mathbb{Z})^n$
or $\mathbb{T}^n$
if we interpret $dd^c(y^tSy)$ as the Chern form of $\check{T}$. However, this ``Chern form'' is in general not
integer valued.

\end{document}